\documentclass[12pt]{amsart}

\usepackage{color}

\usepackage{amsmath}
\usepackage{amsthm}
\usepackage{amssymb}
\usepackage{amscd}
\usepackage{amsfonts}
\usepackage{amsbsy}
\usepackage[noadjust]{cite} 
\usepackage{epsfig,afterpage}
\usepackage{paralist}
\usepackage{psfrag}
\usepackage{mathrsfs}       
\usepackage{verbatim}

\newtheorem {theorem} {Theorem}

\newtheorem {proposition} [theorem]{Proposition}

\newtheorem {remark} [theorem]{\sc Remark}

\title[The Poincar\'e center problem]{A new approach to the Poincar\'e center problem}

\thanks{The author is partially supported by a MICIN grant number PID2020-113758GB-I00 and an AGAUR grant number 2021SGR-01618.}

\author[Isaac A. Garc\'ia and Jaume Gin\'e]{Isaac A. Garc\'ia and Jaume Gin\'e}

\address{Departament de Matem\`atica, Universitat de Lleida,
Avda. Jaume II, 69, 25001 Lleida, Spain}

\email{isaac.garcia@udl.cat}
\email{jaume.gine@udl.cat}

\subjclass[2000]{34Cxx, 37G15, 37G10}

\keywords{Center, periodic orbits, Poincar\'e map}

\begin{document}

\begin{abstract}
We address the classical (degenerate or non-degenerate) center problem posed by Poincar\'e in the 19th century for monodromic singularities of analytic families of planar vector fields $\mathcal{X}$. We prove that, generically, every analytic center admits a Laurent inverse integrating factor $V$ in weighted polar coordinates. Moreover, we show that when $\mathcal{X}$ has no local curves of zero angular speed, the Poincar\'e map is analytic. Based on this result, we derive a theoretical procedure to determine parameter constraints within the family that characterize centers without curves of zero angular speed. Applications to nontrivial families that have resisted other methods are also provided.
\end{abstract}

\maketitle

\section{Introduction and main results}

In this work, we focus on families of planar real analytic vector fields
$\mathcal{X} = P(x,y; \lambda) \partial_x + Q(x,y; \lambda) \partial_y$
defined in a neighborhood of a monodromic singularity, which can be placed at the origin without loss of generality. Here $\lambda \in \mathbb{R}^n$ denotes the finite set of parameters of the family.

Recall that a monodromic singular point of $\mathcal{X}$ is a singularity for which the associated flow rotates around it; consequently, a Poincar\'e map $\Pi$ is well defined on a sufficiently small transversal section with endpoint at the singularity on the origin. Centers and foci are examples of monodromic singularities. We restrict our attention to the monodromic parameter space $\Lambda \subset \mathbb{R}^n$, defined as the subset of parameters for which the origin is a monodromic singularity of $\mathcal{X}$.

Several algorithmic procedures are available to determine the parameter restrictions that define $\Lambda$; see, for example, \cite{Dum, Ar, AGR,AGR2, GGGrau}. The center problem consists in determining the subsets of $\Lambda$ corresponding to those $\mathcal{X}$ such that the  monodromic equilibrium point becomes a center, meaning that all nearby trajectories are closed orbits surrounding the equilibrium and therefore $\Pi$ is the identity map. A key simplification occurs when the singular point has no characteristic directions (see its definition below), hence $\Pi$ is analytic at the origin. Even in this simple case the center problem can be algebraically  unsolvable, see \cite{ADGG,BRY,CYZ,Ga-Ma,Ilyashenko-2,G1,GM,LWX,RS,TLZ} and the computation of a sufficiently large jet of $\Pi$ may be intractable. The harder center problem with characteristic directions is still open and there the function inverse integrating factor has played a relevant role, see \cite{AGG,GaGiGr,Ga-Gi,GaGi2,GS}. A first step for the solution of the center problem consists in to determine the stability of the singularity at first order that corresponds to the computation of the linear part of $\Pi$, also studied in several works, see for instance \cite{GaGi5,Ma,Me-2,Me-3}.

We consider the Newton diagram $\mathbf{N}(\mathcal{X})$ of $\mathcal{X}$; see \cite{Bruno, AGR}. Each edge of $\mathbf{N}(\mathcal{X})$ has slope $-p/q$, where $(p,q) \in \mathbb{N}^2$ are coprime. From now on, we denote by $W(\mathbf{N}(\mathcal{X})) \subset \mathbb{N}^2$ the set of all such weights $(p,q)$.

Introducing the {\it weighted polar} coordinates $(x,y) \mapsto (\varphi, \rho)$ given by $x= \rho^p \cos\varphi$, $y = \rho^q \sin\varphi$, and removing a suitable power of $\rho$ appearing as a common factor, $\mathcal{X}$ is transformed into the {\it polar vector field} $\mathcal{Z} = \Theta(\varphi, \rho) \partial_\varphi + R(\varphi, \rho) \partial_\rho$, with
$R(\varphi, 0)=0$ and $\Theta(\varphi, 0) \geq 0$, assuming without loss of generality that the local flow has counterclockwise orientation.

The vector field $\mathcal{Z}$ is defined on the cylinder $C$ given by
\begin{equation}\label{cilinder}
C =  \{ (\varphi, \rho) \in \mathbb{S}^1 \times \mathbb{R} :  0 \leq \rho \ll 1 \},
\end{equation}
where $\mathbb{S}^1 = \mathbb{R}/ (2 \pi \mathbb{Z})$. The set of {\it characteristic directions} at the origin of $\mathcal{X}$ is defined as
$\Omega_{pq} = \{ \varphi^* \in \mathbb{S}^1 : \Theta(\varphi^*, 0) = 0 \}$. Thus, the set $\{\rho=0 \}$ is invariant under the flow of $\mathcal{Z}$ and becomes either a periodic orbit or a polycycle, depending on whether $\Omega_{pq} = \emptyset$ or $\Omega_{pq} \neq \emptyset$, respectively. This is just the reason why $\Pi$ is analytic when $\Omega_{pq} = \emptyset$.
\newline

An inverse integrating factor $V(\varphi, \rho)$ of the polar vector field $\mathcal{Z}$ on $C$ is a real-valued function of class $C^1(C \setminus \{\rho=0 \})$ that satisfies the linear partial differential equation
\begin{equation}\label{edp-V}
\mathcal{Z}(V) = V \; {\rm div}(\mathcal{Z}),
\end{equation}
where ${\rm div}(\mathcal{Z}) = \partial_\varphi \Theta + \partial_\rho R$ denotes the divergence of $\mathcal{Z}$. Equivalently, \eqref{edp-V} can be written as the divergence-free condition ${\rm div}(\mathcal{Z}/V) \equiv 0$ off the zero-set $V^{-1}(0)$.

We say that $\mathcal{Z}$ possesses a {\it Laurent inverse integrating factor} in $C$ if it admits a Laurent series expansion about $\rho=0$ of the form
\begin{equation}\label{V-Laurent}
V(\varphi, \rho) = \sum_{j \in \mathbb{Z}} v_j(\varphi) \rho^{j},
\end{equation}
whose coefficients are bounded functions $v_j : \mathbb{S}^1 \to \mathbb{R}$.

Therefore, for any fixed $\bar{\varphi} \in \mathbb{S}^1$, the point $\rho=0$ for the function $V(\bar{\varphi}, \rho)$ is either regular at $\rho=0$, or it has a pole (both having leading term) or an essential singularity. In the particular case that \eqref{V-Laurent} becomes $V(\varphi, \rho) = v_m(\varphi) \rho^{m} + \cdots$ with $v_m(\varphi) \not\equiv 0$ and $m \in \mathbb{Z}$, we call $v_m$ the leading coefficient and $m$ the leading exponent.

We point out that only Laurent inverse integrating factors with leading term have been used in previous works like \cite{GaGi2, GaGi4, Ga-Ma}. In \cite{GaGi4}, it is proved that if the rescaled vector field $\mathcal{Z}/\Theta$ admits a Laurent inverse integrating factor (allowing the coefficients $v_j$ to be unbounded on $\Omega_{pq}$) without an essential singularity at $\rho=0$, then the origin of the restricted vector field $\mathcal{X}|_{\Lambda \setminus \Lambda_{pq}}$ is either a center or a focus of maximal order, where
\begin{equation}\label{L-pq}
\Lambda_{pq} = \{ \lambda \in \Lambda : \Theta^{-1}(0) \backslash \{ \rho = 0 \} \neq \emptyset \}.
\end{equation}
Here, we use the term {\it focus of maximal order} to denote a focus whose associated Poincar\'e map $\Pi$ is close to the identity and differs from it only by a single nonvanishing Poincar\'e--Lyapunov quantity; see Remark \ref{remark-principal}(i) for details.

Observe that $\mathcal{X}$ has no local curves of {\it zero angular speed} when restricted to $\Lambda \setminus \Lambda_{pq}$, although $\{\rho=0 \}$ may still be a polycycle. We refer the reader to the appendix of \cite{GaGi4} for the computation of $\Lambda_{pq}$, using either branching theory based on the Newton--Puiseux algorithm or techniques from the singularity theory of maps.

In Proposition 5 of \cite{GGR}, an example is given of a vector field $\mathcal{X}$ with a maximal-order nilpotent focus at the origin, for which $\Omega_{pq} = \emptyset$ (hence $\Lambda_{pq} = \emptyset$), and which does not admit a Laurent inverse integrating factor with leading term.

We show below that centers in the class $\Omega_{pq} = \emptyset$ always admit an analytic inverse integrating factor at $\rho=0$, that is, of the form \eqref{V-Laurent} with non-negative leading exponent.

\begin{proposition}\label{prop-monpq}
Any analytic center with $\Omega_{pq} = \emptyset$ admits an analytic inverse integrating factor $V(\varphi, \rho)$ at $\rho=0$.
\end{proposition}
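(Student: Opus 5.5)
Since $\Omega_{pq} = \emptyset$, we have $\Theta(\varphi,0) > 0$ for all $\varphi \in \mathbb{S}^1$. By compactness of $\mathbb{S}^1$ and continuity, $\Theta$ is then positive on the whole cylinder $C$ for $\rho$ small. We may therefore divide by $\Theta$ and pass to the analytic, $2\pi$-periodic, non-autonomous scalar equation
\begin{equation*}
\frac{d\rho}{d\varphi} = F(\varphi,\rho) := \frac{R(\varphi,\rho)}{\Theta(\varphi,\rho)}, \qquad F(\varphi,0)=0 .
\end{equation*}
Let $\rho = \Phi(\varphi; r)$ denote the solution with $\Phi(0;r) = r$. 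Because $F$ is analytic near $\{\rho=0\}$ and $\rho=0$ is a solution defined on all of $[0,2\pi]$, analytic dependence on initial conditions shows that $\Phi$ is analytic in $(\varphi, r)$ on $[0,2\pi]\times(-\varepsilon,\varepsilon)$. Moreover $\Phi(\varphi;0)=0$ and $\partial_r\Phi(\varphi;0)=\exp\int_0^\varphi \partial_\rho F(s,0)\,ds>0$. The Poincar\'e map is $\Pi(r) = \Phi(2\pi;r)$, and the center hypothesis says that $\Pi \equiv \mathrm{id}$. Hence $\Phi(\varphi+2\pi; r) = \Phi(\varphi; r)$, so the function $r = H(\varphi,\rho)$ obtained by inverting $\rho = \Phi(\varphi;r)$ is well defined on $\mathbb{S}^1 \times (-\varepsilon',\varepsilon')$. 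The inverse function theorem applies uniformly in $\varphi$ because $\partial_r \Phi(\varphi;0)$ is bounded away from zero on the compact circle, and it gives that $H$ is analytic there. This $H$ is a first integral: it is constant along solutions, so $\partial_\varphi H + F\,\partial_\rho H = 0$, and $\partial_\rho H(\varphi,0) = 1/\partial_r\Phi(\varphi;0) > 0$.

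From this first integral I will build the inverse integrating factor by the standard formula $V = \Theta/\partial_\rho H$, which we can also write as $V = -R/\partial_\varphi H$ where that makes sense. Since $\partial_\rho H$ does not vanish near $\rho=0$, $V$ is analytic on $\mathbb{S}^1 \times(-\varepsilon',\varepsilon')$. It is also nonzero at $\rho = 0$, with $V(\varphi,0) = \Theta(\varphi,0)\,\partial_r\Phi(\varphi;0) > 0$. To check \eqref{edp-V}, observe that $\mathcal{Z}/V = \partial_\rho H\,\partial_\varphi - \partial_\varphi H\,\partial_\rho$. Indeed, the $\partial_\varphi$-component is $\Theta\,\partial_\rho H/\Theta = \partial_\rho H$, and the $\partial_\rho$-component is $R\,\partial_\rho H/\Theta = F\,\partial_\rho H = -\partial_\varphi H$. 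This is the Hamiltonian vector field of $H$, so it has zero divergence, and therefore ${\rm div}(\mathcal{Z}/V)\equiv 0$, which is \eqref{edp-V}. The expansion of $V$ in powers of $\rho$ has analytic (hence bounded) coefficients $v_j$ on $\mathbb{S}^1$ and leading exponent $0$, so $V$ is the analytic inverse integrating factor of the form \eqref{V-Laurent} that the proposition asks for.

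The step that needs the most care is the analyticity and periodicity of $H$, that is, making sure the center condition gives an honest analytic first integral defined on the whole cylinder and not only on a fundamental domain in $\varphi$. I will handle it as above: use the periodicity of $F$ together with $\Pi = \mathrm{id}$ to get $\Phi(\varphi+2\pi;r)=\Phi(\varphi;r)$, and then invert uniformly in $\varphi$. A minor side point is the sign convention for the $\rho<0$ side, which is harmless because every step is analytic in $\rho$ through $\rho = 0$.
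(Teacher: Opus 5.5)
Your proof is correct and is essentially the paper's argument made explicit. The paper cites Lemma 8 of Li--Llibre--Zhang to get an analytic diffeomorphism $\zeta$ conjugating $\mathcal{Z}/\Theta$ to $\partial_\varphi$ and takes $1/J$ (the pull-back of the inverse integrating factor $1$); your map $(\varphi,\rho)\mapsto(\varphi,H(\varphi,\rho))$, built by hand from $\Pi=\mathrm{id}$, is exactly such a $\zeta$ with $J=\partial_\rho H$, so your argument is self-contained, and your $V=\Theta/\partial_\rho H$ is directly an inverse integrating factor of $\mathcal{Z}$ itself (whereas the paper's $1/J$ is one for $\mathcal{Z}/\Theta$), with $V(\varphi,0)>0$.
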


\begin{remark}
{\rm In contrast to what occurs in the coordinates $(\varphi, \rho)$, the analyticity of inverse integrating factors in Cartesian coordinates $(x,y)$ is not guaranteed for centers with $\Omega_{pq} = \emptyset$. Let $\omega = P(x,y; \lambda) , dy - Q(x,y; \lambda) , dx$ be the differential 1-form associated with $\mathcal{X}$. We say that a $C^1$ real-valued function $v(x,y)$ is an inverse integrating factor of $\mathcal{X}$ if $\omega / v(x,y)$ is closed in a neighborhood of the origin, except on the zero set $v^{-1}(0)$. We note that there exist polynomial vector fields $\mathcal{X}$ with a center at the origin and $\Omega_{pq} = \emptyset$ that do not admit any inverse integrating factor $v(x,y)$ analytic at the origin. The reason for that is because sometimes any smooth $v(x,y)$ is necessarily flat at the origin; see the proof of Theorem 2 in \cite{GP}. }
\end{remark}

Below we show that, in general, inverse integrating factors $V(\varphi, \rho)$ are singular at $\rho=0$ once the condition $\Omega_{pq} = \emptyset$ is removed, as stated in Proposition \ref{prop-monpq}. The main results of this work are the following. First we need to consider the real-complex system
\begin{equation}\label{complex-eq}
\Theta(\varphi, \rho) = R(\varphi, z) = 0,
\end{equation}
with  $z \in \mathbb{C}$ and ${\rm Re}(z) = \rho$.

\begin{theorem}\label{center-V}
Any analytic center admits a Laurent inverse integrating factor $V$ of the form \eqref{V-Laurent} provided that \eqref{complex-eq} has isolated solutions $(\varphi, z) = (\varphi^*, 0)$ for any $\varphi^* \in \Omega_{pq}$. Moreover, this condition is guarantee when we restrict the parameters to $\Lambda \setminus \Lambda_{pq}$.
\end{theorem}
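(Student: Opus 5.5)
The idea is to build an explicit first integral of the polar vector field $\mathcal{Z}$ on the cylinder $C$, valid because the origin is a center, and then take $V$ from the standard relation between first integrals and inverse integrating factors. At points where $\Theta\neq 0$ I will use the orbit equation
\[
\frac{d\rho}{d\varphi} = F(\varphi,\rho) := \frac{R(\varphi,\rho)}{\Theta(\varphi,\rho)} .
\]
Let $\rho(\varphi;r)$ denote its solution with $\rho(\varphi_0;r)=r$, where $\varphi_0 \notin \Omega_{pq}$ is a fixed transversal section. Since the origin is a center, every orbit near $\{\rho=0\}$ is closed. So the inverse $H(\varphi,\rho)=r$ of the map $r\mapsto \rho(\varphi;r)$ is a well-defined first integral on $C\setminus\{\rho=0\}$, and it is single-valued in $\varphi$ because $\Pi=\mathrm{id}$. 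Then $V := (\Theta\,\partial_\rho H)^{-1}$ satisfies \eqref{edp-V}, or, in the Hamiltonian form, $\mathcal{Z}/V$ is $\partial_\rho H\,\partial_\varphi - \partial_\varphi H\,\partial_\rho$ up to sign. The whole problem is to show that this $V$ (or a multiple $g(H)V$ of it) has a Laurent expansion in $\rho$ with bounded coefficients.

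\textbf{Step 1.} Away from $\Omega_{pq}$ this is immediate. There $F$ is analytic in $\rho$ near $0$, so $\rho(\varphi;r)$ is analytic in $r$ with $\rho(\varphi;0)=0$, and the argument of Proposition \ref{prop-monpq} applies. On any arc of $\mathbb{S}^1$ avoiding $\Omega_{pq}$, $H$ is analytic in $\rho$ with $\partial_\rho H(\varphi,0)\neq0$, so $V$ is analytic there.

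\textbf{Step 2.} Control $V$ near each characteristic direction $\varphi^*\in\Omega_{pq}$. Here the isolatedness hypothesis on \eqref{complex-eq} enters. Since $(\varphi^*,0)$ is an isolated solution of $\Theta(\varphi,\rho)=R(\varphi,z)=0$, near $\varphi^*$ the function $\Theta$ vanishes only at $\rho=0$ and at most along curves where $R$ has no complex zero $z$ with ${\rm Re}(z)=\rho$. I would Weierstrass-prepare $\Theta$ and $R$ at $(\varphi^*,0)$:
\[
\Theta=\rho^{a}\,(\varphi-\varphi^*)^{b}\,u_1 + \cdots .
\]
Using the isolatedness, I would show that for fixed $\varphi\neq\varphi^*$ close to $\varphi^*$, $F(\varphi,\cdot)$ is meromorphic at $\rho=0$. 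I would then show that the local transition map across the sector at $\varphi^*$ (the Dulac map of the corner of the polycycle) is a composition of maps meromorphic in $\rho$ for each fixed $\varphi$. In particular, for each fixed $\varphi$, $H(\varphi,\cdot)$ and $\partial_\rho H(\varphi,\cdot)$ have at most an isolated singularity at $\rho=0$. This gives the Laurent expansion \eqref{V-Laurent} pointwise in $\varphi$. Coefficients obtained by Cauchy integrals over a fixed small circle $|z|=\epsilon$ are bounded uniformly in $\varphi$ provided the solutions of \eqref{complex-eq} do not approach that circle; isolatedness lets me choose such an $\epsilon$ uniformly for $\varphi$ near $\varphi^*$. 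Possibly an essential singularity appears, which \eqref{V-Laurent} allows.

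\textbf{Step 3.} Prove the final claim. For $\lambda\in\Lambda\setminus\Lambda_{pq}$, definition \eqref{L-pq} gives $\Theta^{-1}(0)\subset\{\rho=0\}$. Hence $\Theta(\varphi,\rho)=0$ with $\rho$ small forces $\rho=0$, and then $\varphi\in\Omega_{pq}$, which is finite (the zeros of the nonzero analytic function $\Theta(\cdot,0)$ on $\mathbb{S}^1$). Given such $(\varphi,\rho)$, the compatible $z$ satisfy ${\rm Re}(z)=0$ and $R(\varphi^*,z)=0$. Because $R(\varphi,0)\equiv0$ and $R(\varphi^*,\cdot)$ is analytic and not identically zero (otherwise $\{\varphi=\varphi^*\}$ would be invariant, contradicting monodromy), its zero $z=0$ is isolated. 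So the solutions of \eqref{complex-eq} are isolated.

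I expect Step 2 to be the main obstacle: getting uniform boundedness of the $v_j$ in $\varphi$ across the characteristic directions, since $1/\Theta$ blows up there.
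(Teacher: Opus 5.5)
The genuine gap is Step 2, which carries the whole content of the theorem.

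First, the isolatedness hypothesis is never really used there. For fixed $\varphi\neq\varphi^*$ near $\varphi^*$ one has $G_0(\varphi)\neq 0$, so $F(\varphi,\cdot)$ is analytic at $\rho=0$ for trivial reasons. This says nothing about $\varphi=\varphi^*$ itself, nor about uniformity as $\varphi\to\varphi^*$, where the radius of analyticity of $V(\varphi,\cdot)$ may shrink to $0$. Your uniform Cauchy estimate on a fixed circle $|z|=\epsilon$ presupposes that $V(\varphi,\cdot)$ extends holomorphically to a fixed punctured disc for all $\varphi$ near $\varphi^*$. That is exactly what has to be proved, and you give no link between complex solutions of \eqref{complex-eq} and complex singularities of $V(\varphi,\cdot)$.

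Second, the key claim that the passage near $(\varphi^*,0)$ is a composition of maps meromorphic in $\rho$ is false. After blowing up $(\varphi^*,0)$ the orbits typically cross hyperbolic saddle corners, whose Dulac maps behave like $\rho\mapsto C\rho^{\kappa}$ with $\kappa$ a ratio of eigenvalues. For instance, let $\mathcal{X}=(y^2+x^4)(-y\partial_x+x\partial_y)+(cxy+ex^3)(x\partial_x+y\partial_y)$ with $c<1$ and $0<e^2<4(1-c)$. Then:
\begin{itemize}
\item $\Theta=\sin^2\varphi+\rho^2\cos^4\varphi$ and $R=c\rho\sin\varphi\cos\varphi+e\rho^2\cos^3\varphi$, with $\Omega_{11}=\{0,\pi\}$ and $\Theta>0$ for $\rho\neq0$;
\item the solutions of \eqref{complex-eq} are isolated, e.g. $R(0,iy)=-ey^2$;
\item the origin is a center, being monodromic and reversible under $(x,y,t)\mapsto(-x,y,-t)$.
\end{itemize}
The blow-up of $(0,0)$ has two saddle corners with ratio $\kappa=1-c$. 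With $\varphi_0=\pi/2$, your first integral satisfies $H(0,\rho)\sim C\rho^{\kappa}$, hence $V(0,\rho)=\Theta(0,\rho)/\partial_\rho H(0,\rho)\sim K\rho^{3-\kappa}$ with $K\neq 0$. Taking $c=1-\sqrt{2}$, the corners are analytically linearizable. Then $V(0,\cdot)$ is $\rho^{3-\sqrt{2}}$ times a convergent series in $\rho$ and $\rho^{\sqrt{2}}$, a multivalued function around $\rho=0$ with no expansion \eqref{V-Laurent}.

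So the $V$ you construct is in general not Laurent. Everything is pushed into choosing a multiplier $g(H)$ that works simultaneously at every $\varphi^*\in\Omega_{pq}$, and the proposal has no mechanism for that. Step 1 is affected too: it only covers the component of $\mathbb{S}^1\setminus\Omega_{pq}$ containing $\varphi_0$. On the other components $H(\varphi,\cdot)$ already contains a full corner passage whose analyticity you have not established.

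The paper proceeds differently. It builds $V$ by characteristics from an analytic datum on a transversal, and gets $2\pi$-periodicity from $\Pi={\rm id}$ (the divergence integrates to zero along the closed orbits). At each $\varphi^*$ it complexifies $\rho$ in \eqref{complex-edo}, using isolatedness so that the flow of a punctured disc foliates the punctured solid torus. Then $\hat V(\varphi^*,\cdot)$ lives on a punctured disc and Laurent's theorem applies. Be aware that your $V$ is exactly such a characteristic solution: in the example its datum is $V(\pi/2,\cdot)=\Theta(\pi/2,\cdot)\equiv 1$. So the example also shows this holomorphy cannot hold for every analytic datum, since \eqref{complex-edo} is not holomorphic in $z$ ($\dot\varphi=\Theta(\varphi,{\rm Re}\,z)$). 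Borrowing that step would not by itself close your gap.

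In Step 3 the reduction is right, and more direct than the paper's Puiseux-branch argument: on $\Lambda\setminus\Lambda_{pq}$ the solutions near $(\varphi^*,0)$ are exactly the points $(\varphi^*,iy)$ with $R(\varphi^*,iy)=0$. But your justification of $R(\varphi^*,\cdot)\not\equiv 0$ is wrong. If $R(\varphi^*,\cdot)\equiv 0$, then $\mathcal{Z}=\Theta\,\partial_\varphi$ along $\{\varphi=\varphi^*\}$, so this line is crossed transversally for $\rho\neq0$, not invariant, and this is compatible with monodromy. In the family above with $e=0$ one has $R(0,\cdot)\equiv 0$: a center without curves of zero angular speed for which $(0,iy)$ solves \eqref{complex-eq} for every small real $y$. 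So Step 3 needs $R(\varphi^*,\cdot)\not\equiv 0$ as an extra assumption; the paper's argument, which only follows branches $z=z_j(\varphi)$, tacitly assumes it as well.

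Two slips to fix:
\begin{itemize}
\item The inverse integrating factor attached to $H$ is $V=\Theta/\partial_\rho H$, as your Hamiltonian description says, not $(\Theta\,\partial_\rho H)^{-1}$.
\item Isolatedness alone allows $\Theta$ to vanish off $\{\rho=0\}$, so $H$ should be defined through the flow (first hit of $\{\varphi=\varphi_0\}$) rather than through the orbit equation.
\end{itemize}
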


\begin{remark}
{\rm In the proof of Theorem \ref{center-V} we show that generically \eqref{complex-eq} has isolated solutions $(\varphi, z) = (\varphi^*, 0)$ for any $\varphi^* \in \Omega_{pq}$. Indeed we think it is always true in the monodromic context. }
\end{remark}

The next two remarks show examples of both centers and foci having Laurent inverse integrating factors with essential singularity at $\rho=0$.

\begin{remark}
{\rm The following example is extracted from \cite{BerroneGiacomini}. The vector field $\mathcal{X} = (-y + x (x^2+y^2)) \partial_x + (x + y (x^2+y^2)) \partial_y$ has a focus at the origin, and its associated polar vector field $\mathcal{Z} = \partial_{\varphi} + \rho^3 \partial_{\rho}$ admits the inverse integrating factor
\begin{equation}\label{V-focus-esencial}
V_{1}(\varphi, \rho) = \rho^3 \, \sin(2 \varphi + \rho^{-2})
\end{equation}
which is analytic in $C \setminus \{\rho = 0\}$. We observe that $V_1$ has the Laurent expansion at $\rho = 0$ without leading term given by
$$
V_{1}(\varphi, \rho) = \rho^3 \left( \sin(2\varphi) \cos(\rho^{-2})+ \cos(2\varphi) \sin(\rho^{-2}) \right)
$$
obtained after the Laurent expansion at $\rho=0$ of $\cos(\rho^{-2})$ and $\sin(\rho^{-2})$. It is worth noting that $\mathcal{Z}$ also has the inverse integrating factor $V_2(\varphi, \rho) = \rho^3$. }
\end{remark}

\begin{remark}
{\rm The vector field $\mathcal{X} = -y \partial_x + x \partial_y$ has a center at the origin, and its associated polar vector field $\mathcal{Z} = \partial_{\varphi}$ admits the inverse integrating factor $V_1(\varphi, \rho) = \sin(\rho^{-1}) + \sin(\rho)$, which is analytic in $C \setminus \{\rho = 0\}$ and exhibits an essential singularity at $\rho = 0$. We note that $\mathcal{Z}$ also possesses the inverse integrating factor $V_2(\varphi, \rho) = \rho$. }
\end{remark}

An important consequence of the technique used in the proof of Theorem \ref{center-V} concerns the structure of the Poincar\'e map. Following Il'yashenko's work \cite{Ilyashenko}, it follows that $\Pi$ may even be non-differentiable at the origin. Specifically, $\Pi$ is a semiregular map with a linear term, meaning it can be expressed as $\Pi(x) = \eta_1 x + o(x)$, where the linear leading coefficient satisfies $\eta_1 > 0$. Using the analytic dependence of $\mathcal{X}$ on $\lambda$, Medvedeva in \cite{Me} shows that $\Pi$ admits a Dulac asymptotic expansion of the form
\begin{equation}\label{Poinc-expan}
\Pi(x) = \eta_1 x + \sum_j \mathcal{P}_j(\log x) \, x^{\nu_j},
\end{equation}
where the exponents $\nu_j > 1$ are independent of $\lambda$ and tend to infinity, while $\mathcal{P}_j$ are polynomials whose coefficients depend analytically on the coefficients of $\mathcal{X}$.

It is well known that \eqref{Poinc-expan} contains no logarithmic terms and converges in a neighborhood of the origin (hence $\Pi$ is analytic at the origin) when $\Omega_{pq} = \emptyset$, since in that case $\rho = 0$ becomes a periodic orbit of $\mathcal{Z}$. The following result generalizes the previous one, extending it to the case where the parameters lie in $\Lambda \setminus \Lambda_{pq}$.

\begin{theorem}\label{center-Pi}
The monodromic singular point at the origin of any analytic planar vector field restricted to $\Lambda \setminus \Lambda_{pq}$ has associated an analytic Poincar\'e map $\Pi$ at the origin.
\end{theorem}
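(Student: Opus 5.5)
On $\Lambda\setminus\Lambda_{pq}$ we have $\Theta(\varphi,\rho)\neq 0$ for $0<\rho\ll 1$. By the counterclockwise normalization, $\Theta>0$ there. I would work with the non-autonomous scalar equation
\[
\frac{d\rho}{d\varphi}=F(\varphi,\rho):=\frac{R(\varphi,\rho)}{\Theta(\varphi,\rho)},
\]
whose solution $\rho(\varphi;\rho_0)$ with $\rho(0;\rho_0)=\rho_0$ gives $\Pi(\rho_0)=\rho(2\pi;\rho_0)$ in the weighted polar chart. Analyticity of $\Pi$ in the variable $\rho_0$ on the section $\varphi=0$ then transfers to the usual transversal section. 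When $\Omega_{pq}=\emptyset$ this is the classical argument: $F$ is analytic near $\rho=0$, so analytic dependence on initial conditions applies.

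\textbf{Reducing the singular case to bounded $F$.} When $\Omega_{pq}\neq\emptyset$, $F$ may be singular at points $(\varphi^*,0)$ with $\varphi^*\in\Omega_{pq}$. The plan is to show that on $\Lambda\setminus\Lambda_{pq}$ the function $F$ is nevertheless analytic, after possibly extending to complex $\rho=z$ with $\mathrm{Re}(z)=\rho$. This is exactly where the condition from Theorem \ref{center-V}, that \eqref{complex-eq} has only isolated solutions $(\varphi^*,0)$, enters.

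Fix $\varphi^*\in\Omega_{pq}$. Since $\Theta$ has no zeros with $\rho>0$ near $\varphi^*$, Weierstrass preparation in $\varphi$ gives $\Theta=u\cdot\prod(\varphi-\varphi^*-\sigma_i(\rho))$ with Puiseux roots $\sigma_i$ that are non-real for $\rho>0$. Monodromy forces $R(\varphi^*,\rho)$ to have the correct sign, so that orbits cross the sector near $\varphi^*$. The key claim is that $\Theta$ divides $R$ up to a power of $\rho$ near $(\varphi^*,0)$: $R=\rho^{k}\Theta\,G+\text{(terms giving analytic contribution)}$. The isolatedness of the common zeros in \eqref{complex-eq} means $R$ does not vanish on the complex branches of $\Theta^{-1}(0)$ except at the origin point. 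Consequently the quotient $\rho^{-k}R/\Theta$, restricted to $\varphi$ real and $\rho>0$, is controlled by $|\varphi-\varphi^*|+\rho^{\alpha}$ for some $\alpha>0$.

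\textbf{Rescaling and the Dulac map.} Rather than insisting that $F$ itself is analytic, I would blow up each $(\varphi^*,0)$ by a weighted rescaling $\varphi-\varphi^*=\rho^{\alpha}s$. The absence of real zeros of $\Theta$ for $\rho>0$ means the transformed angular component has no zeros on the new exceptional divisor along real $s$. The passage past $\varphi^*$ then becomes a regular transit map along a segment where the vector field has nonvanishing first component. Such a transit map is analytic in $\rho$ by the analytic flow box theorem, with $\rho^{\alpha}$ reconverted to $\rho$ via the monodromy of the edge. The transit maps between consecutive characteristic directions, where $\Theta(\varphi,0)>0$, are analytic for the classical reason. $\Pi$ is then the composition of finitely many analytic maps, hence analytic.

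\textbf{Main obstacle.} The hard step is showing that the exceptional-divisor transit maps carry no logarithmic or fractional terms, i.e. that the corner transitions in the rescaled chart are regular points rather than hyperbolic saddles. Saddles would produce Dulac terms $\mathcal{P}_j(\log x)x^{\nu_j}$ as in \eqref{Poinc-expan}. I would try to prove that the only singularities on the blown-up divisor over $\varphi^*$ sit at complex $s$ coming from the non-real roots $\sigma_i$, using $\lambda\notin\Lambda_{pq}$. If this fails, the fallback is to use the Laurent inverse integrating factor $V$ from Theorem \ref{center-V} and the identity relating $V$ along orbits to $\Pi'$. This would show that the coefficients of the $\log$ terms in Medvedeva's expansion vanish and that the expansion converges.
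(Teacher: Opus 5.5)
Your proposal has a genuine gap, and it sits exactly at what you call the main obstacle: the passage past a characteristic direction $\varphi^*$ is never a flow-box transit. In your chart $\varphi-\varphi^*=\rho^{\alpha}s$, an orbit going from $\varphi=\varphi^*-\delta$ to $\varphi=\varphi^*+\delta$ at height $\rho_0\to0$ has $s$ running from about $-\delta\rho_0^{-\alpha}$ to $+\delta\rho_1^{-\alpha}$. So it leaves every compact piece of the exceptional divisor and passes close to the two points where the divisor meets the strict transform of $\{\rho=0\}$.

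Both curves are invariant: the divisor because monodromy excludes a dicritical blow-up, and $\{\rho=0\}$ because $R(\varphi,0)=0$. Two transversal invariant curves can only meet at a zero of the (divided) blown-up field. Hence these corners are real singular points for every parameter value, whether or not $\lambda\in\Lambda_{pq}$, and since nearby orbits pass by them without reaching them, they are of saddle type. For instance, for \eqref{ejemplo3-JDE-Armengol} at $\varphi^*=0$ in polar coordinates one has $\Theta=3b\varphi^2+\rho\varphi+2\rho^2+\cdots$ and $R=b\rho\varphi-\rho^2+\cdots$. In the chart $\rho=\varphi u$ the corner is a saddle with eigenvalues $3b$ and $-2b$.

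Transits near such corners are Dulac maps. In general these produce non-integer powers and, at resonant saddles that are not orbitally linearizable, logarithmic terms. ``Reconverting $\rho^{\alpha}$ to $\rho$'' is exactly the step they obstruct. So the hope that all singularities over $\varphi^*$ lie at non-real $s$ is false, and the ``hard step'' is the whole content of the theorem. Nothing in your argument forces the corner contributions to cancel, and the hypothesis $\lambda\notin\Lambda_{pq}$ only concerns real zeros of $\Theta$, not the saddle quantities at the corners. Even for interior points of the divisor your inference is not valid as stated: the $s$-component comes from $\Theta-\alpha s\rho^{\alpha-1}R$, so it involves $R$ and is governed by monodromy, not by $\lambda\notin\Lambda_{pq}$.

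The remaining ingredients do not close this gap. The divisibility claim $R=\rho^k\Theta G+(\text{harmless terms})$ contradicts your own reading of isolatedness. Since $\Theta(\varphi,0)=G_0\not\equiv0$, no branch of $\Theta=0$ through $(\varphi^*,0)$ is $\{\rho=0\}$, and divisibility would force $R$ to vanish along all of them. In the example above $F=R/\Theta$ is bounded but discontinuous at $(\varphi^*,0)$, so no splitting of this kind makes $F$ analytic there. The fallback is not available either: Theorem \ref{center-V} constructs $V$ only for centers (its $2\pi$-periodicity comes from $\Pi=\mathrm{id}$), where there is nothing to prove.

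The paper proceeds quite differently and performs no desingularization. It notes that \eqref{eq3**} is analytic off $\{\rho=0\}$ on $\Lambda\setminus\Lambda_{pq}$, so $\Pi$ is analytic on the punctured interval. It then complexifies $\rho\to z$ and uses the isolatedness of the solutions of \eqref{complex-eq} to define a complex return map on a punctured disc, asserted to be holomorphic and hence given by a Laurent series. Finally it removes the principal part using Il'yashenko's $\Pi(\rho_0)=\eta_1\rho_0+o(\rho_0)$.

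If you switch to that route, two points need genuine justification. The first is the holomorphy in $z_0$ of that return map, since in \eqref{complex-edo} the $\varphi$-equation depends on $\rho=\mathrm{Re}\,z$. The second is the exclusion of an essential singularity that is negligible along the real axis (such as $e^{-1/\rho_0^2}$), which real asymptotics alone cannot rule out. That is where the corner behaviour you identified would have to be controlled.
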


\begin{remark}
{\rm We want to emphasize that the existence or not of curves of zero angular speed may depend on the weights $(p,q) \in W(\mathbf{N}(\mathcal{X}))$. We show this behaviour with the following family
\begin{equation}\label{ejemplo3-JDE-Armengol}
\dot{x} = b x^2 y + a x y^2 - b y^3 - x^4, \ \ \ \dot{y} = 4 b x y^2 + a y^3 + 2 x^5.
\end{equation}
In the work \cite{Ga-Ma-Ma} it is proved that the origin of \eqref{ejemplo3-JDE-Armengol} has a monodromic singularity in the parameter set $\Lambda = \{ (a, b) \in \mathbb{R}^2 : b > 1/6 \}$ and moreover that $\eta_1 = \exp\left( \frac{2 \pi \sqrt{3}}{3} \frac{a}{b} \right)$ using polar coordinates. Therefore, the origin is an attractor (resp. repeller) focus if $a < 0$ (resp. $a > 0$) while it is a time-reversible center for $a=0$. We notice that in \eqref{ejemplo3-JDE-Armengol}  one has $W(\mathbf{N}(\mathcal{X})) = \{ (1,1), (1,2) \}$. First we see that $\Lambda_{11} = \emptyset$ because, using polar coordinates $(\varphi, \rho)$, the equation $\Theta(\varphi, \rho)$ is polynomial of degree 2 in $\rho$ with discriminant negative for any $\varphi$ in a neighborhood of $\Omega_{11} = \{0, \pi\}$ and $b \in \Lambda$. On the other hand, if now $(\varphi, \rho)$ denote (1,2)- weighted polar coordinates, then $\Theta(\varphi, \rho)$ is again a quadratic polynomial in $\rho$ but its discriminant is positive when $\varphi$ lies in a neighborhood of $\Omega_{12} = \{\pi/2, 3 \pi/2\}$ and $b \in \Lambda$ provided $|a| > 4 b$. In summary we have seen that $\Lambda_{12} \neq \emptyset$. }
\end{remark}

Restricting parameters to $\Lambda \setminus \Lambda_{pq}$ and assuming the existence of a Laurent inverse integrating factor for $\mathcal{Z}$ with leading term, it was shown in \cite{GaGi4} that $\Pi$ admits a formal expansion at the origin. In light of Theorem \ref{center-Pi}, we now know that $\Pi$ is in fact analytic at the origin. This represents a significant improvement over some results presented in Theorem 10 of \cite{GaGi4}, where additional (and unnecessary) conditions were imposed to establish the analyticity of $\Pi$.

On the other hand, we wish to point out that even though $\Pi$ is analytic at the origin, and therefore its expansion
\begin{equation}\label{Poinc-analytic}
\Pi(\rho_0) = \sum_{j\geq 1} \eta_j \rho_0^j
\end{equation}
is simpler than \eqref{Poinc-expan}, the computation of the coefficients $\eta_j$ (generalized Poincar\'e--Lyapunov quantities) remains an open problem. The difficulty arises from the fact that $\Pi(\rho_0) = \Phi(2 \pi; \rho_0)$, where the solution $\Phi(\varphi; \rho_0)$ of the Cauchy problem given by $d \rho / d \varphi = R(\varphi, \rho)/\Theta(\varphi, \rho)$ with initial condition $\Phi(0; \rho_0) = \rho_0 > 0$ sufficiently small is not analytic with respect to $\rho_0$ at $\rho_0 = 0$. An obvious exceptional case where Bautin's method can be applied to compute $\eta_j$ occurs when $\Omega_{pq} = \emptyset$, in which case $\Phi(\varphi; \rho_0) = \sum_{j\geq 1} c_j(\varphi) \rho_0^j$ and $\eta_j = c_j(2 \pi)$. Another (nontrivial) exceptional case where the $\eta_j$ can be computed is given by those monodromic singularities in $\Lambda \setminus \Lambda_{pq}$ that possess an explicit Laurent inverse integrating factor with leading term, as established in \cite{GaGi4} (see Remark \ref{remark-principal}).

\section{A procedure to solve some center-focus problems on $\Lambda \setminus \Lambda_{pq}$}

The key point of the method relies in the following result that indicates how to compute the coefficients $v_j(\varphi)$ with $j \geq 1$ of the Laurent expression  \eqref{V-Laurent} of $V(\varphi, \rho)$ when the initial condition $V(0, \rho)$ is analytic at $\rho=0$.

\begin{theorem} \label{teo-Pi-Algorithm}
We restrict to the parameter space $\Lambda \setminus \Lambda_{pq}$ and, by a rotation, we take $0 \not\in \Omega_{pq}$. Then the Laurent expansion \eqref{V-Laurent} with initial condition $V(0, \rho)$ analytic at $\rho=0$ becomes a convergent Taylor expansion
\begin{equation}\label{Taylor-V}
V(\varphi, \rho) \Big|_{I_{pq}} = \sum_{j \ge m} v_j(\varphi) \, \rho^j,
\end{equation}
with $m \geq 1$ and $I_{pq} = [0, 2 \pi] \setminus \Omega_{pq}$.
\end{theorem}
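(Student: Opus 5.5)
The plan is to build $V$ by transporting its initial condition along the flow of $\mathcal{Z}$, and to get analyticity in $\rho$ from the flow itself. On $\Lambda\setminus\Lambda_{pq}$ the angular speed $\Theta$ vanishes only on $\{\rho=0\}$, and only at the finitely many points of $\Omega_{pq}$. So on $I_{pq}\times(0,\rho_0)$ with $\rho_0$ small we have $\Theta>0$, and the orbits are graphs of the solutions $\Phi(\varphi;\rho_0)$ of $d\rho/d\varphi=R/\Theta$.

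Solving \eqref{edp-V} along characteristics gives the standard representation
\begin{equation*}
V(\varphi,\Phi(\varphi;\rho_0)) = V(0,\rho_0)\,\frac{\Theta(\varphi,\Phi(\varphi;\rho_0))}{\Theta(0,\rho_0)}\,\partial_{\rho_0}\Phi(\varphi;\rho_0).
\end{equation*}
This is the usual identity $V=\Theta\,\partial_{\rho_0}\Phi$ up to the factor fixed by the initial condition. It can be checked either via the divergence formula or by noting that $\Theta\,\partial_{\rho_0}\Phi$ is the Jacobian transport of $\Theta$.

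The proof then reduces to showing that, for each fixed $\varphi\in I_{pq}$, the map $\rho_0\mapsto\Phi(\varphi;\rho_0)$ is analytic at $\rho_0=0$, with $\Phi(\varphi;0)=0$ and $\partial_{\rho_0}\Phi(\varphi;0)>0$. Once this holds, $\Phi(\varphi;\cdot)$ is invertible near $0$ with analytic inverse $\rho_0=\Psi(\varphi;\rho)$. Substituting into the formula above, and using that $V(0,\cdot)$ is analytic and $\Theta(0,\rho)\neq0$ near $\rho=0$ (since $0\notin\Omega_{pq}$), shows that $V(\varphi,\cdot)$ is analytic at $\rho=0$. Expanding then gives \eqref{Taylor-V}. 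The coefficients $v_j$ are the Laurent coefficients of \eqref{V-Laurent}, and uniqueness of Laurent expansions shows that all negative-index coefficients vanish on $I_{pq}$. Finally, $m\ge1$ holds because $R(\varphi,0)=0$ forces $V$ to vanish on the invariant curve $\rho=0$; equivalently, the factor $\partial_{\rho_0}\Phi\cdot\Theta$ combined with $V(0,\rho)=O(\rho)$ gives this. If $V(0,0)\neq0$ one first checks that it is incompatible with \eqref{edp-V} at $\rho=0$ off $\Omega_{pq}$: evaluating \eqref{edp-V} at $\rho=0$ gives a linear ODE for $v_0$ whose solution is forced to blow up at $\Omega_{pq}$ unless the relevant coefficient is zero. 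I would normalize by taking $V(0,\rho)=O(\rho)$.

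For the analyticity of $\Phi$, the key point is that on the compact subinterval $[0,\varphi]\subset I_{pq}$, hence away from $\Omega_{pq}$, $\Theta(\cdot,0)$ is bounded below by a positive constant. Therefore $R/\Theta$ is analytic in a complex neighbourhood of $[0,\varphi]\times\{0\}$ and vanishes at $\rho=0$. The Cauchy–Kovalevskaya / analytic dependence on initial conditions then gives a convergent expansion $\Phi(\varphi;\rho_0)=\sum_{j\ge1}c_j(\varphi)\rho_0^j$, with $c_1(\varphi)=\exp\int_0^\varphi\partial_\rho(R/\Theta)(s,0)\,ds>0$. This is exactly the mechanism used in the case $\Omega_{pq}=\emptyset$.

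The main obstacle is that this argument is only local, on compact subsets of $I_{pq}$. The radius of convergence of both $\Phi$ and its inverse may shrink to zero as $\varphi$ approaches a point of $\Omega_{pq}$, and the coefficients $v_j(\varphi)$ may become unbounded there. So I would state the convergence of \eqref{Taylor-V} pointwise in $\varphi\in I_{pq}$, uniformly only on compact subsets. At this step I would invoke the hypothesis $\lambda\notin\Lambda_{pq}$, via the isolatedness in Theorem~\ref{center-V}, to ensure that orbits starting at $\varphi=0$ do reach every $\varphi\in I_{pq}$ without meeting zeros of $\Theta$ off $\rho=0$. This guarantees that the characteristic representation covers the whole strip, so the $v_j$ obtained are globally the coefficients of \eqref{V-Laurent} on $I_{pq}$.
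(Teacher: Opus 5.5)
Your route is the paper's: characteristics from the transversal $\{\varphi=0\}$, analyticity at $\rho=0$ away from $\Omega_{pq}$, then uniqueness of Laurent coefficients. Your representation $V(\varphi,\Phi)=V(0,\rho_0)\,\Theta(\varphi,\Phi)\,\partial_{\rho_0}\Phi/\Theta(0,\rho_0)$ is correct. The gap is in the analyticity step.

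The inclusion $[0,\varphi]\subset I_{pq}$ holds only for $\varphi$ in the component $[0,\varphi_1^*)$, where $\varphi_1^*$ is the first point of $\Omega_{pq}$. For $\varphi$ in any other component, the orbit from $(0,\rho_0)$ must pass the singular point $(\varphi^*,0)$ of $\mathcal{Z}$. There $R/\Theta$ is not analytic and the flight time blows up as $\rho_0\to 0$, so analytic dependence on initial data says nothing about $\rho_0\mapsto\Phi(\varphi;\rho_0)$ at $\rho_0=0$. That crossing is a Dulac-type transition near a degenerate singularity, which in general need not be analytic. The paper itself notes that $\Phi(\varphi;\rho_0)$ is not analytic at $\rho_0=0$, and your formula passes any non-analytic term straight into $V(\varphi,\cdot)$. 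In fact, at $\varphi=2\pi\in I_{pq}$ your reduction is literally the analyticity of $\Pi=\Phi(2\pi;\cdot)$, i.e.\ Theorem~\ref{center-Pi}, which you do not prove.

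Invoking $\lambda\notin\Lambda_{pq}$ does not repair this. It guarantees that orbits with $\rho>0$ cross each $\varphi^*$, so the representation covers the strip. It does not make the crossing map analytic at $\rho_0=0$. As written, you have the Taylor expansion only on $[0,\varphi_1^*)$. The paper handles this same point with a one-line appeal to Cauchy--Kovalevskaya. That theorem is local near a non-characteristic curve, and $\{\varphi=\varphi^*\}$ is characteristic at $(\varphi^*,0)$, so it does not supply the crossing either. An actual argument that analyticity at $\rho=0$ survives the passage across each $\varphi^*\in\Omega_{pq}$ is what is missing.

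A smaller issue concerns $m\ge 1$, where neither of your justifications is right. First, an inverse integrating factor need not vanish on the invariant curve $\rho=0$: for $\mathcal{Z}=\partial_\varphi$, $V\equiv 1$ is one, with analytic initial condition and $m=0$. Second, the $m=0$ solution $v_0=C\,G_0\,e^{\mathcal{P}}$ need not blow up at $\Omega_{pq}$; with $R_1/G_0=\tan\varphi$, as in the paper's $(3,5)$ example, it is bounded. In any case, behaviour at $\Omega_{pq}$ is irrelevant on $I_{pq}$. The bound $m\ge 1$ comes only from choosing $V(0,\rho)=O(\rho)$, which your formula then propagates, so it should be stated as part of the normalization rather than derived.
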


The condition $V(0, \rho)$ analytic at $\rho=0$ in Theorem \ref{teo-Pi-Algorithm} is very important. For example, Theorem \ref{teo-Pi-Algorithm} cannot be applied to the Laurent inverse integrating factor \eqref{V-focus-esencial} since it has an essential singularity at $\rho=0$ for any $\varphi$.

Based on Theorems \ref{center-V} and \ref{teo-Pi-Algorithm}, we propose a method to derive necessary center conditions for analytic families of vector fields $\mathcal{X}$ exhibiting a monodromic singularity on $\Lambda \setminus \Lambda_{pq}$. To this end, we use either polar coordinates or $(p,q)$-weighted polar coordinates $(\varphi, \rho)$ with $(p,q) \in W(\mathbf{N}(\mathcal{X}))$.

The first step consists in proposing the initial terms of a convergent Taylor expansion \eqref{Taylor-V} of $V(\varphi, \rho)$ valid in $[0, 2\pi] \setminus \Omega_{pq}$ where $m \in \mathbb{N}$ is arbitrary. By requiring that this series satisfy equation \eqref{edp-V} in $[0, 2\pi] \setminus \Omega_{pq}$, we obtain constraints on the parameters $\lambda$ of the family $\mathcal{X}$, ensuring that an initial string of coefficients $v_j$ with $j \geq m \geq 1$ is formed by well-defined functions on $\mathbb{S}^1$, in particular the coefficients $v_j$ must be bounded at $\Omega_{pq}$ and $2 \pi$-periodic. These constrains are center conditions.

We continue by computing further coefficients $v_j$ until either a saturation occurs, meaning no additional parameter constraints arise. In the former situation, if we are able to obtain a closed-form expression for $V$, rather than merely the first terms of its expansion \eqref{Taylor-V} in $[0, 2\pi] \setminus \Omega_{pq}$, and $V$ has not essential singularity at $\rho=0$ then we can apply the technique developed in \cite{GaGi4} to derive a set of necessary and sufficient center conditions (see Remark \ref{remark-principal} for details).

\begin{remark}\label{remark-principal}
{\rm Once we have determined a sufficiently long initial string of coefficients $v_j$ that are well-defined on $\mathbb{S}^1$ in the expansion \eqref{Taylor-V}, and we observe a possible stabilization in the sense that no further parameter conditions arise, then we can proceed as follows. Assuming $V$ exists and it has not essential singularity at $\rho=0$, that is, $V(\varphi, \rho) = \sum_{j \geq m} v_j(\varphi) \rho^{j}$ with $v_m(0)=1$, according to the results in \cite{GaGi4}, only two possibilities remain:

\begin{itemize}
  \item[(i)] The singularity becomes a focus of maximal order, admitting a Laurent inverse integrating factor with leading exponent $m$. In this case, the Poincar\'e map has a formal power series expansion of the form $\Pi(\rho) = \eta_1 \rho + \cdots$ when $m = 1$, or $\Pi(\rho) = \rho + \eta_m \rho^m + \cdots$ when $m \geq 2$. Moreover, $\Pi(\rho) = \rho$ holds only if $\eta_1 = 1$ or $\eta_m = 0$, respectively.

  \item[(ii)] The singularity becomes a center.
\end{itemize}

To distinguish between cases (i) and (ii) within the restricted monodromic parameter space $\Lambda \setminus \Lambda_{pq}$, it is necessary to obtain the closed-form expression of $V$, rather than only the first terms of its expansion near $\rho = 0$. In this context, setting $\mathcal{F} = R / \Theta$, we obtain that $\log \eta_1 = \mathfrak{g}$ when $m = 1$, and $\eta_m = \mathfrak{g}$ when $m > 1$, where $\mathfrak{g} = G(r)$ with
\begin{equation}\label{Function-G}
G(r) = \int_0^{2 \pi} \frac{\mathcal{F}(\varphi, r)}{V(\varphi, r)} \, d \varphi,
\end{equation}
and this expression is independent of $r$ for all sufficiently small $r > 0$.}
\end{remark}

We note that there exist maximal-order foci that do not admit a Laurent inverse integrating factor with leading term. This behavior is illustrated by the system $\dot{x}= y$, $\dot{y}= -x^5 + a x^4 y$, which has a maximal-order focus at the origin for $a \neq 0$, as can be verified by applying the Bautin method, given that $\Omega_{13} = \emptyset$. In Proposition 5 of \cite{GGR}, it is shown that this family of foci does not possess a Laurent inverse integrating factor with leading term because we reach some coefficient $v_j$ with $j > m$ not defined in $\mathbb{S}^1$.

\begin{remark}\label{remark-cilindro}
{\rm In order to apply the results from \cite{GGR}, instead of seeking a Laurent inverse integrating factor $V$ for $\mathcal{Z}$ with leading term, we may equivalently look for a Laurent inverse integrating factor $V^*(\varphi, \rho)$ for the vector field $\mathcal{Z}^* = \mathcal{Z}/\Theta = \partial_\varphi + \mathcal{F}(\varphi, \rho) \partial_\rho$ defined on $C \setminus \Theta^{-1}(0)$. The relationship between $V$ and $V^*$ is simply $V^* = V / \Theta$. In this setting, we observe that the coefficients $v_j$ in the Laurent expansions of $V^*$ near $\rho = 0$ are functions defined only on $\mathbb{S}^1 \setminus \Omega_{pq}$ and may become unbounded near $\Omega_{pq}$. }
\end{remark}

\subsection{Computation of the coefficients $v_j$ of the Taylor expansion \eqref{Taylor-V}} \label{SS-ascending}

For any $\varphi \not\in \Omega_{pq}$, we put the expansion $\sum_{j \geq m} v_j(\varphi) \rho^{j}$ with arbitrary $m \in \mathbb{N}$ for the $V$ in the partial differential equation \eqref{edp-V} together with $\Theta(\varphi, \rho) = \sum_{j \geq 0} G_j(\varphi) \rho^{j}$ and $R(\varphi, \rho) = \sum_{j \geq 1} R_j(\varphi) \rho^{j}$ yielding that the coefficients $v_j$ are solutions of $2 \pi$-periodic singular linear differential equation
\begin{equation}\label{edo-para-vj}
\mathcal{L}^{a}_j(v_j) + Q_j = 0
\end{equation}
where $\mathcal{L}^{a}_{m+j}(v_{m+j}) := G_0 v'_{m+j} + ((m+j-1) R_1 - G'_0) v_{m+j}$ for $j \geq 0$, with $Q_m \equiv 0$ and where $Q_j(\varphi)$ with $j > m$ is a function depending linearly on the $v_k$ and $v'_k$ with $k < j$. In particular, the leading coefficient $v_m$ is a nontrivial solution of the homogeneous linear differential equation
\begin{equation}\label{EDO-GM}
G_0 v'_m + ((m-1) R_1 - G'_0) v_m = 0.
\end{equation}
If we fix $m=1$ then \eqref{EDO-GM} has all its nontrivial solutions $v_m(\varphi; C) = C \, G_0(\varphi)$ analytic on $[0, 2\pi]$ with constant $C \neq 0$. Otherwise, except special cases, only analytic solutions with $m > 1$ of \eqref{EDO-GM} are guarantee in each connected open component $I_k$ of $[0, 2\pi] \setminus \Omega_{pq}$ since \eqref{EDO-GM} is singular at $\Omega_{pq}$. More specifically, when $m > 1$, on each component we have
\begin{equation}\label{vm-explicit}
v_m(\varphi; C_m) = C_m G_0(\varphi) \exp\left( (1-m) \, \mathcal{P}(\varphi) \right)
\end{equation}
with arbitrary $C_m \neq 0$ and $\mathcal{P}' = R_1/G_0$. In order to $v_m$ with $m >1$ will be defined at $\Omega_{pq}$ (excluding even removable discontinuities of $v_m$ at $\Omega_{pq}$) two possibilities arise regarding the behaviour of $\mathcal{P}$ at $\Omega_{pq}$:

\begin{itemize}
  \item[(i)] Either $\mathcal{P}|_{\Omega_{pq}}$ is bounded (sometimes we will need to take a continuous primitive $\mathcal{P}$ at $\Omega_{pq}$ that will be piecewise defined on $[0, 2 \pi]$) or it is not at some $\varphi^* \in \Omega_{pq}$ but then $\lim_{\varphi \to \varphi^*} \mathcal{P}(\varphi) = + \infty$ and $v_m(\varphi^*; C_m) =0$. In any case we get that the function \eqref{vm-explicit} is well defined on $[0, 2 \pi]$.

  \item[(ii)] Otherwise \eqref{vm-explicit} has no solution on $[0, 2 \pi]$ except for $m=1$.
\end{itemize}

Finally, we will impose that a solution of \eqref{vm-explicit} on $[0, 2 \pi]$ be defined in $\mathbb{S}^1$ imposing its $2\pi$-periodicity. This is guarantee for $m=1$. When $m > 1$ we fall in the former case (i) then this condition is merely $\mathcal{P}(2 \pi) = \mathcal{P}(0)$. Notice that when additionally $\mathcal{P}$ is continuous at $\Omega_{pq}$ then the Cauchy principal value
\begin{equation}\label{xipq}
\xi_{pq} = PV \int_{0}^{2 \pi} \frac{R_1(\varphi)}{G_0(\varphi)} d \varphi,
\end{equation}
used in some works, exists and takes the value $\xi_{pq} =  \mathcal{P}(2 \pi) - \mathcal{P}(0)$. Therefore $\xi_{pq} = 0$ is a necessary center condition when $m >1$ and $\mathcal{P}$ is continuous at $\Omega_{pq}$. This result is in agreement with the equation $(m-1) \xi_{pq} = 0$ appearing in Proposition 2 of \cite{GGR} which is only valid for Laurent inverse integrating factors with leading exponent $m$ and under some extra technical assumption.

\subsection{How the method works in a toy example}

We consider the linear vector field $\mathcal{X} = (-y + \lambda x) \partial_x + (x + \lambda y) \partial_y$ and we are going to see that the origin is a center if and only if $\lambda=0$.

We have $W(\mathbf{N}(\mathcal{X})) = \{ ((1, 1) \}$ and taking polar coordinates $(\varphi, \rho)$ its associated polar vector field is $\mathcal{Z} = \partial_\varphi + \lambda \rho \partial_\rho$ so that $\Omega_{11} = \emptyset$ and therefore $\Lambda_{11} = \emptyset$ and $\Lambda = \mathbb{R}$. This implies that the differential equations $\mathcal{L}^{a}_j(v_j) = 0$ are non-singular and we only have to compute the coefficients $v_j$ defined in $\mathbb{S}^1$ if they exist. Straightforward computations yield $\mathcal{L}^{a}_m(v_m) = v'_m + (m-1) \lambda v_m = 0$, hence $v_m(\varphi; C_m)= C_m \exp((1-m) \lambda \varphi)$ with $C_m \neq 0$. The $2 \pi$-periodicity of $v_m$ gives the center condition $(m-1) \lambda = 0$.

We assume that $\lambda \neq 0$ and we take $m=1$. Then $v_1(\varphi)=1$ and the only $2 \pi$-periodic solution of each equation $\mathcal{L}^{a}_j(v_j) = 0$ is the trivial one $v_j(\varphi)= 0$ for all $j >1$. At this point we see that $V(\varphi, \rho) = \rho$ is an inverse integrating factor of $\mathcal{Z}$ and therefore the origin is either a focus or a maximal order focus. To discern between these two possibilities we compute, according to \eqref{Function-G},
$$
G(r) = \int_0^{2 \pi} \frac{\mathcal{F}(\varphi, r)}{V(\varphi, r)} d \varphi = \int_0^{2 \pi} \lambda d \varphi = 2\pi \lambda.
$$
Since $m=1$, we have $\eta_1 = \exp(2 \pi \lambda) \neq 1$ so the origin is a focus.

\section{Non-trivial examples}

\subsection{Ma\~{n}osa monodromic family I}

In the work \cite{Ma} it is analyzed the 1-parameter family
\begin{equation}\label{ejemplo-manyosa}
\dot{x} = x y^2 - y^3 + a x^5, \ \ \dot{y} = 2 x^7 - x^4 y + 4 x y^2 + y^3,
\end{equation}
that has a monodromic singular point at the origin when it is restricted to the parameter set $\Lambda = \{a \in \mathbb{R} : \Delta(a) := 32 - (1+3 a)^2 >0 \}$. In \cite{GaGi3} this family was named Ma\~{n}osa monodromic family I when the sufficient focus condition developed there was applied to show that the origin of \eqref{ejemplo-manyosa} is a focus in $\{ a >0 \} \cap \Lambda$. Indeed, in \cite{Ma} the author proves a strong result showing that on $\Lambda$ the linear coefficient of $\Pi$ is
\begin{equation}\label{eta1-manyosa}
\eta_1 = \exp\left( \pi + \frac{4 \pi a}{\sqrt{\Delta(a)}} \right).
\end{equation}
Nevertheless the center or focus nature of the origin of \eqref{ejemplo-manyosa} when $a = -31/25$ (corresponding to $\eta_1 = 1$) is unknown as far as we know, so we will apply our method to solve this problem.

\begin{proposition}\label{prop-manyosa-I-foco}
The origin of family \eqref{ejemplo-manyosa} restricted to $\Lambda$ is a focus.
\end{proposition}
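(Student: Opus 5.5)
By \eqref{eta1-manyosa}, $\eta_1 \neq 1$ on $\Lambda$ except at $a=-31/25$. Indeed $\eta_1=1$ requires $\pi + 4\pi a/\sqrt{\Delta(a)}=0$, i.e. $a<0$ and $\Delta(a)=16a^2$, i.e. $32-(1+3a)^2=16a^2$, whose only negative root is $a=-31/25$. Since $\eta_1 \neq 1$ already forces a focus, the whole proof reduces to the single parameter value $a=-31/25$. There I plan to use the method of Section 2, built on Theorems \ref{center-V} and \ref{teo-Pi-Algorithm}.

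First, fix the weights. The Newton diagram of \eqref{ejemplo-manyosa} has vertices coming from $y^3$, $xy^2$ and the $x^7$, $x^4y$, $x^5$ terms, so I expect the relevant weight to be $(p,q)=(1,2)$. The terms $y^3\partial_x$, $xy^2\partial_x$, $x^5\partial_x$, $x^7\partial_y$, $x^4y\partial_y$ and $y^3\partial_y$ are then quasi-homogeneous of the same weighted degree. In $(1,2)$-weighted polar coordinates I will compute $\Theta=G_0(\varphi)+G_1(\varphi)\rho+\cdots$ and $R=R_1(\varphi)\rho+\cdots$, determine $\Omega_{12}$ (likely $\{\pi/2,3\pi/2\}$, where $x=0$), and check that $a=-31/25$ lies in $\Lambda\setminus\Lambda_{12}$. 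For the latter I will show that $\Theta(\varphi,\rho)=0$ has no small positive root $\rho$ for $\varphi$ near $\Omega_{12}$, e.g.\ by a discriminant or sign argument as in the remark on \eqref{ejemplo3-JDE-Armengol}.

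Next, I will argue by contradiction. Suppose the origin is a center at $a=-31/25$. By Theorem \ref{center-V} there is a Laurent inverse integrating factor. Moreover, by Theorem \ref{center-Pi} and the analyticity available on $\Lambda\setminus\Lambda_{12}$, I expect to be able to take $V(0,\rho)$ analytic, for instance $V(0,\rho)=\rho^m$ by choosing the first integral transversally. Then Theorem \ref{teo-Pi-Algorithm} gives a convergent expansion $V=\sum_{j\ge m} v_j\rho^j$ with $v_j$ well defined, bounded and $2\pi$-periodic on $\mathbb{S}^1$.

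I then solve \eqref{edo-para-vj} recursively. For $m>1$, the explicit formula \eqref{vm-explicit} together with periodicity imposes $\xi_{12}=0$; this should be exactly the condition $\eta_1=1$, so it holds at $a=-31/25$ and does not yet decide anything. I will therefore push to higher $j$. For each $j$ I take the variation-of-constants solution $v_j = v_m^{(j)}\int Q_j/(G_0 v_m^{(j)})$ on each component $I_k$ of $[0,2\pi]\setminus\Omega_{12}$. I then check whether it can be made bounded at $\Omega_{12}$ and $2\pi$-periodic, which gives a linear obstruction: a residue-type constant or a periodicity defect. The case $m=1$ must be treated as well: there $v_1=CG_0$, and a center with $m=1$ would make $V$ analytic, with $G(r)$ of \eqref{Function-G} needing to vanish order by order. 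The goal is to exhibit the first $j$ at which this obstruction is a nonzero number at $a=-31/25$. That contradicts the existence of $V$, and the contradiction shows the origin is a focus.

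The main obstacle is this computation, since the $v_j$ involve nonelementary primitives of $R_1/G_0$ with singularities at $\Omega_{12}$, and verifying that the obstruction constant is nonzero may require exact symbolic integration. If this fails, my fallback is to show directly that the second nontrivial coefficient of the analytic $\Pi$ from Theorem \ref{center-Pi} (e.g.\ $\eta_2$ or $\eta_3$) is nonzero. I would compute it via a perturbative, Melnikov-type integral of the next-order terms along the quasi-homogeneous flow at $a=-31/25$.
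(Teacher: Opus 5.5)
You have the same strategy as the paper, and your reduction to $a=-31/25$ is correct. The strategy is: assume a center, use Theorems \ref{center-V} and \ref{teo-Pi-Algorithm} to get $V=\sum_{j\ge m}v_j\rho^j$ with every $v_j$ well defined on $\mathbb{S}^1$, and find an index at which \eqref{edo-para-vj} has no admissible solution. For the reduction, $\sqrt{\Delta(a)}=-4a$ gives $25a^2+6a-31=0$, whose negative root is $-31/25$. But the proposal stops exactly where the proof has to start. You never produce the obstruction, i.e.\ a specific $j$ and a specific quantity that is nonzero at $a=-31/25$, and you flag this yourself as ``the main obstacle''. As it stands, nothing is proved. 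The fallback does not close the gap either. The paper points out that computing the coefficients $\eta_j$ of $\Pi$ when $\Omega_{pq}\neq\emptyset$ is an open problem, because $\Phi(\varphi;\rho_0)$ is not analytic in $\rho_0$.

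The preliminary claims you do commit to would also steer the computation wrongly.
\begin{itemize}
\item The edges of the Newton diagram give $W(\mathbf{N}(\mathcal{X}))=\{(1,1),(1,3)\}$. $(1,2)$ is not an edge weight, and the monomials you list do not share a $(1,2)$-degree (e.g.\ $y^3\partial_x$ has degree $5$, $xy^2\partial_x$ degree $4$).
\item The paper uses ordinary polar coordinates after the swap $(x,y)\mapsto(y,x)$, which is needed because $0\in\Omega_{11}$ originally. It obtains $G_0=\cos^2\varphi(-5+3\cos 2\varphi)/2$ and $\Omega_{11}=\{\pi/2,3\pi/2\}$, and shows $\Lambda_{11}=\emptyset$ from the discriminant of the biquadratic $\Theta$.
\item In this chart $R_1/G_0=(2+6\cos\varphi\sin\varphi)/(-5+3\cos 2\varphi)$ does not depend on $a$. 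Its continuous primitive satisfies $\mathcal{P}(2\pi)-\mathcal{P}(0)=-\pi$. So the leading-order periodicity condition for $m>1$ is not ``exactly $\eta_1=1$'': it fails for every $a$, and the paper discards $m>1$ immediately. The reason is that this principal value sees only one edge. Up to the orientation reversal from the swap, it is the term $\pi$ in \eqref{eta1-manyosa}, while $4\pi a/\sqrt{\Delta(a)}$ comes from the $(1,3)$ edge.
\item Your $m=1$ test via \eqref{Function-G} is empty. $G(r)$ is independent of $r$, equals $\log\eta_1=0$ at $a=-31/25$, and in any case needs a closed form of $V$.
\end{itemize}

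The missing step is the paper's explicit computation with $m=1$.
\begin{itemize}
\item $v_1=C_1G_0$, and the only periodic $v_2$ is $v_2\equiv 0$.
\item Writing $v_3=v_3^h w_3$, one finds $w_3'=2W_3\sin^2\varphi\tan^2\varphi\,e^{-F_3}/(5-3\cos 2\varphi)^3$ with $W_3=-7-17a+(1+7a)\cos 2\varphi$.
\item At $a=-31/25$ this gives $W_3=(352-192\cos 2\varphi)/25>0$.
\end{itemize}
The paper uses this definite sign, together with the monotonicity of $F_3$, to rule out a $2\pi$-periodic $v_3$, and hence the existence of $V$.
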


\begin{proof}
As we mentioned before, using the result of \cite{Ma}, we only need to prove that system \eqref{ejemplo-manyosa} with $a = -31/25$ is a focus. Family \eqref{ejemplo-manyosa} has $W(\mathbf{N}(\mathcal{X})) = \{ (1, 3), (1, 1) \}$. Taking polar coordinates $(\varphi, \rho)$ we check that $0 \in \Omega_{11}$. Therefore, we permute coordinates $(x,y) \mapsto (y,x)$ in \eqref{ejemplo-manyosa} and we reach, taking again polar coordinates in the transformed system, that $G_0(\varphi) = \cos^2\varphi (-5 + 3 \cos( 2\varphi))/2$, hence $\Omega_{11} = \{\pi/2, 3 \pi/2\}$.

First we prove that there are no curves of zero angular speed. We see that $\Theta(\varphi, \rho) = - \cos^4\varphi + (a+1) \rho^2 \cos\varphi \sin^5\varphi - 2 \rho^4 \sin^8\varphi - \sin^2(2\varphi) = 0$ is a biquadrate equation in $\rho$. A necessary condition to have real solutions $\rho=\rho^*(\varphi, a)$ in a neighborhood of $\Omega_{11}$ and $\Lambda$ is that the discriminant $\delta(\varphi, a) = -39 + a (2 + a) - (-23 + a (2 + a)) \cos(2\varphi)$ be positive in that region. From the continuity of $\delta$ and the fact that $\delta|_{\Omega_{11}} = -62 + 2 a (2 + a) < 0$ on $\Lambda$ we deduce that $\Lambda_{11} = \emptyset$.

We see that
$$
\frac{R_1(\varphi)}{G_0(\varphi)}= \frac{2 + 6 \cos\varphi \sin\varphi}{-5 + 3 \cos(2\varphi)}
$$
has a primitive $p(\varphi) = (-\arctan(2 \tan\varphi) - \log(5 - 3 \cos(2 \varphi))/2$ that is not continuous at $\Omega_{11}$ since $\arctan(2 \tan\varphi)$ has a finite jump discontinuities at $\Omega_{11}$ taking the principal branch of $\arctan$. For that reason we take the piecewise primitive
$$
\mathcal{P}(\varphi) = \begin{cases}
p(\varphi) & \varphi \in [0, \pi/2), \\
p(\varphi)-  \pi/2 & \varphi \in [\pi/2, 3\pi/2), \\
p(\varphi) - \pi & \varphi \in [3\pi/2, 2\pi],
\end{cases}
$$
that is continuous on $[0, 2 \pi]$. Indeed, $\mathcal{P}$ is analytic on $[0, 2 \pi]$ since $\mathcal{P}'(\varphi) = p'(\varphi) = R_1/G_0$ is analytic there. Then there are analytic coefficient functions $v_m(\varphi; C_m)$ for any $m > 1$ given by \eqref{vm-explicit} but none of them are $2 \pi$-periodic since $\mathcal{P}$ is not.

Therefore we need to take $m=1$ so that $v_1(\varphi; C_1) = C_1 \, G_0(\varphi)$ with $C_1 \neq 0$. The next coefficient satisfies the equation $\mathcal{L}^{a}_2(v_2) = G_0(\varphi) v'_2(\varphi) + P_2(\varphi) v_2(\varphi) = 0$ is again homogeneous with $P_2(\varphi) = \cos\varphi (4 \cos\varphi - 17 \sin\varphi + 15 \sin(3\varphi))/4$. It is easy to see, like we did for $v_m$ previously, that the only $2 \pi$-periodic solution of this equation is the trivial one $v_2(\varphi) \equiv 0$.

The differential equation \eqref{edo-para-vj} with $j=3$ that $v_3$ satisfies is no longer homogeneous since $Q_3 \not\equiv 0$. We let $v_3^h(\varphi)$ to be the solution of the associated homogeneous equation $\mathcal{L}^{a}_3(v_3^h) = 0$. Up to a multiplicative constant it has the form $v_3^h(\varphi) = \cos^2\varphi  (5 - 3 \cos(2 \varphi))^2  \exp(F_3(\varphi))$ where
$$
F_3(\varphi) = \begin{cases}
f_3(\varphi) & \varphi \in [0, \pi/2), \\
f_3(\varphi) + \pi & \varphi \in [\pi/2, 3\pi/2), \\
f_3(\varphi) + 2\pi & \varphi \in [3\pi/2, 2\pi],
\end{cases}
$$
with $f_3(\varphi) = \arctan(2 \tan\varphi)$ in order to have $v_3^h$ analytic on $[0, 2 \pi]$. We perform the change of dependent variable $v_3(\varphi; C_3) = v_3^h(\varphi) w_3(\varphi; C_3)$ and we obtain that
$$
w_3'(\varphi; C_3) = \frac{2 W_3(\varphi) \sin^2\varphi \tan^2\varphi}{(5 - 3 \cos(2 \varphi))^3} \exp(-F_3(\varphi))
$$
with $W_3(\varphi) = -7 - 17 a + (1 + 7 a) \cos(2 \varphi)$.

Taking $a = -31/25$, se see that $W_3 > 0$ in $[0, 2 \pi]$, hence $w_3' > 0$ there and, since $F_3$ is monotonous increasing we deduce that $v_3$ cannot be $2 \pi$-periodic. We conclude that there is no Laurent inverse integrating factor of \eqref{ejemplo-manyosa} with $a=-31/25$ finishing the proof.
\end{proof}

\subsection{The (3,5)-semi-homogeneous family}

We analyze the center-focus problem at the origin of the monodromic $(n, N)$-semi-homogeneous family $\mathcal{X} = P_n(x,y) \partial_x + Q_N(x,y) \partial_y$ where $P_n$ and $Q_N$ are homogeneous polynomials of odd degrees $n$ and $N$, respectively, and $n< N$. The associated differential system is
\begin{equation}\label{semi-mM}
\dot{x} = P_n(x,y) = -y^n + \sum_{\stackrel{i+j=n}{\scriptscriptstyle i \neq 0}} a_{ij} x^i y^j, \ \ \dot{y} = Q_N(x,y) = x^N + \sum_{\stackrel{i+j=N}{\scriptscriptstyle j \neq 0}} b_{ij} x^i y^j,
\end{equation}
restricted to the monodromic parameter space $\Lambda$.

We analyze the lower degree degenerate case $(3, 5)$-semi-homogeneous family. In this case it is tedious but straightforward to check using the monodromy criterium described in \cite{AGR,AGR2} that
\begin{equation}\label{Lambda-35}
\Lambda = \{  a_{30} = 0, a_{12}^2 + 4 a_{21} < 0 \}.
\end{equation}

\begin{proposition}\label{prop-seihomo-35}
Let the origin of the monodromic $(3, 5)$-semi-homogeneous family \eqref{semi-mM} be a center. Then $L_1 = 0$, where
\begin{eqnarray*}
L_1 &=& a_{12}^5 + 5 a_{12}^3 a_{21} + 5 a_{12} a_{21}^2 - 2 a_{21}^5 b_{05} + a_{12} a_{21}^4 b_{14} -  a_{12}^2 a_{21}^3 b_{23} \\
 &  & - 2 a_{21}^4 b_{23} + a_{12}^3 a_{21}^2 b_{32} + 3 a_{12} a_{21}^3 b_{32} -
 a_{12}^4 a_{21} b_{41} - 4 a_{12}^2 a_{21}^2 b_{41} -
 2 a_{21}^3 b_{41}.
\end{eqnarray*}
\end{proposition}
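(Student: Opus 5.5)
We will apply the procedure of Section 2, using the weight $(p,q)=(1,2)$ given by the Newton diagram of the $(3,5)$-semi-homogeneous family. With $a_{30}=0$ the leading part is $\dot x=-y^3+a_{21}x^2y+a_{12}xy^2$, $\dot y=x^5+\dots$, and these terms are quasi-homogeneous of weighted degrees $(7,8)$-compatible under $x=\rho\cos\varphi$, $y=\rho^2\sin\varphi$. In this case the whole vector field is quasi-homogeneous: every monomial $x^iy^j$ with $i+j=3$ or $i+j=5$ has weighted degree $i+2j$. Consequently the polar field has the form $\mathcal{Z}=\Theta\partial_\varphi+R\partial_\rho$, where $\Theta=\sum_k G_k(\varphi)\rho^k$ and $R=\sum_k R_k(\varphi)\rho^k$ are polynomials in $\rho$ with trigonometric coefficients.

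\textbf{Step 1: the angular speed.} First we compute $G_0$ and check that $\Omega_{12}$ is nonempty. It contains the direction $\varphi=\pi/2$ (the $y$-axis), where the term $-y^3$ dominates and the $x^5$ term does not. Using $a_{12}^2+4a_{21}<0$ from \eqref{Lambda-35}, we check that $\Theta(\varphi,\rho)=0$ has no real solution with $\rho>0$ small near $\Omega_{12}$. This gives $\Lambda_{12}=\emptyset$, in the same way as the discriminant argument in the proof of Proposition \ref{prop-manyosa-I-foco}. After a rotation (or the swap $x\leftrightarrow y$) we can assume $0\notin\Omega_{12}$. Theorem \ref{center-V} then gives a Laurent inverse integrating factor for every center, and Theorem \ref{teo-Pi-Algorithm} turns it into a Taylor series $\sum_{j\ge m}v_jρ^j$ whose coefficients solve \eqref{edo-para-vj}.

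\textbf{Step 2: the leading coefficient.} Next we compute $R_1/G_0$ and a primitive $\mathcal{P}$, continuous through $\Omega_{12}$ and taken piecewise if needed. For $m>1$, the periodicity condition $\xi_{12}=0$ from \eqref{xipq} is expected to force a relation that is either automatic or incompatible. If it is incompatible, we take $m=1$ and $v_1=C_1G_0$.

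\textbf{Step 3: the next coefficients.} We then compute $v_2,v_3,\dots$ by variation of constants, $v_j=v_j^h w_j$ with $v_j^h$ solving $\mathcal{L}^a_j(v_j^h)=0$. A coefficient $v_j$ is well defined on $\mathbb{S}^1$ when two conditions hold. First, $w_j$ must extend analytically through $\Omega_{12}$, which requires certain residue-type terms to vanish. Second, the total increment of $w_j$ over $[0,2\pi]$ must be compatible with the periodicity of $v_j^h$. Each condition takes the form of a definite integral of a rational trigonometric function, which can be computed in closed form by residues in $\tan\varphi$. The first nontrivial obstruction should be a polynomial in the parameters, and after clearing denominators (powers of $a_{21}$ and of $a_{12}^2+4a_{21}$) it should be exactly $L_1$. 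The presence of the terms $a_{21}^5b_{05}$ and $a_{12}^5$ suggests that it arises at the first order where all the $b_{ij}$ enter linearly. Because a center forces every $v_j$ to be defined on $\mathbb{S}^1$, this yields $L_1=0$ as a necessary condition.

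The main obstacle is Step 3. The singular linear ODEs at $\Omega_{12}$ have to be solved explicitly, and the obstruction integral has to be evaluated in closed form with all the parameters free. We also have to check that the vanishing of the obstruction cannot be absorbed by a choice of the free constants $C_j$ or of the exponent $m$.
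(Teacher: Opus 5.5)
Your proposal has a genuine gap. The whole content of the proposition is the explicit polynomial $L_1$, and your Step~3 never computes it. It only asserts that the first obstruction ``should be exactly $L_1$''. No coefficient $v_j$ is actually solved for, so nothing connects the method to this particular polynomial.

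The $(1,2)$ chart is also misdescribed. The family is not $(1,2)$-quasi-homogeneous: $x^2y$, $xy^2$, $y^3$ have weighted degrees $4,5,6$. Its $(1,2)$-principal part is only $a_{21}x^2y\,\partial_x+x^5\,\partial_y$. If the field really were quasi-homogeneous, the polar field would reduce to $G_0\partial_\varphi+R_1\rho\,\partial_\rho$ and there would be no higher-order obstruction to find. In the correct chart, up to a positive factor:
\begin{itemize}
\item $G_0=\cos^2\varphi(\cos^4\varphi-2a_{21}\sin^2\varphi)$.
\item $R_1/G_0=\sin\varphi\cos\varphi(a_{21}+\cos^2\varphi)/(\cos^4\varphi-2a_{21}\sin^2\varphi)$ is analytic and odd. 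Hence $\xi_{12}=0$ and every $m\ge 1$ survives your Step~2.
\item $a_{12}$ enters $\Theta$ at order $\rho$, the $-y^3$ term at order $\rho^2$, and $b_{ij}$ at order $\rho^{j}$. In particular $b_{05}$ first enters the equation for $v_{m+5}$.
\end{itemize}
Since $L_1$ contains $-2a_{21}^5b_{05}$ with $a_{21}\neq0$ on $\Lambda$, it can only surface after you have solved the singular inhomogeneous equations for $v_{m+1},\dots,v_{m+4}$. Each of these carries its own possible obstructions and free constants, and you handle none of them.

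Your conclusion $\Lambda_{12}=\emptyset$ is true, but not for the reason you give. Near $\varphi=\pi/2$ the quadratic part $-2a_{21}\cos^2\varphi-2a_{12}\rho\cos\varphi+2\rho^2$ of $\Theta$ is positive definite exactly when $a_{12}^2+4a_{21}<0$. It is not that $-y^3$ dominates $G_0$; in this chart $-y^3$ does not enter $G_0$ at all.

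The paper uses ordinary polar coordinates after the swap $(x,y)\mapsto(y,x)$. In that chart all quintic terms enter at a single order:
\begin{itemize}
\item $\Theta=G_0+G_2\rho^2$, with $G_0=\cos^2\varphi(-\cos^2\varphi+a_{12}\sin\varphi\cos\varphi+a_{21}\sin^2\varphi)\le 0$ and $G_2|_{\Omega_{11}}=-1$. This gives $\Lambda_{11}=\emptyset$ at once.
\item $R_1/G_0=\tan\varphi$.
\end{itemize}

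Your closing worry, that the obstruction might be absorbed by the constants $C_j$ or by the choice of $m$, is settled by the freedom in Theorem~\ref{teo-Pi-Algorithm}. For a center, every analytic initial datum $V(0,\rho)$ yields an inverse integrating factor whose coefficients are defined on $\mathbb{S}^1$. So it suffices to exhibit one obstructed choice. The paper takes $V(0,\rho)=c_1\rho+O(\rho^3)$. This forces $m=1$, $v_1=G_0$, and $v_2\equiv 0$, since the solutions of $\mathcal{L}^a_2(v_2)=0$ are $C_2G_0|\cos\varphi|$.

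The first inhomogeneous equation, for $v_3$, then gives $v_3=v_3^h(w_3+C_3)$. Here $v_3^h=-2\cos^2\varphi\,G_0$ and
\[
w_3=\frac{L_1}{\sqrt{-\Delta}}F_3+G_3,\qquad \Delta=a_{12}^2+4a_{21}.
\]
$F_3$ is a continuous, monotone, arctan-type function, and $v_3^hG_3$ extends continuously to $\Omega_{11}$. Periodicity of $v_3$ therefore forces $L_1=0$. The linearity of $L_1$ in the $b_{ij}$ that you noticed is exactly the signature of this chart: the quintic coefficients enter only through $G_2$ and $R_3$, hence linearly in the $v_3$ equation.
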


\begin{proof}
The $(3, 5)$-semi-homogeneous family \eqref{semi-mM} has $W(\mathbf{N}(\mathcal{X})) = \{ (1, 2), (1, 1) \}$. Taking polar coordinates we have that $0 \in \Omega_{11}$, hence we permute coordinates $(x,y) \mapsto (y,x)$ and, taking again polar coordinates in the transformed system, we get that $G_0(\varphi) = \cos^2\varphi (- \cos^2\varphi + a_{12} \sin\varphi \cos\varphi  + a_{21} \sin^2\varphi) \leq 0$ and $\Omega_{11} = \{\pi/2, 3 \pi/2\}$ when we restrict to $\Lambda$.

The equation $\Theta(\varphi, \rho) = G_0(\varphi) + G_2(\varphi) \rho^2 = 0$ is a quadratic equation in $\rho$. A necessary condition to have real positive solutions $\rho=\rho^*(\varphi, a, b) = \sqrt{-G_0/G_2}$ in a neighborhood of $\Omega_{11}$ and $\Lambda$ is that $G_2 > 0$ there. But $G_2$ is continuous and $G_2|_{\Omega_{11}} = -1 < 0$, hence $\Lambda_{11} = \emptyset$.

We have that $R_1(\varphi)/G_0(\varphi) = \tan\varphi$ with primitive $p(\varphi) = -\log(|\cos\varphi|)$ noncontinuous at $\Omega_{11}$ going to $+ \infty$. Then we cannot build any primitive bounded at $\Omega_{11}$. Then the function $v_m$ with $m>1$ given in \eqref{vm-explicit} is $v_m(\varphi; C_m) = C_m G_0(\varphi) \, |\cos^{m-1}\varphi|$ with $C_m \neq 0$.

Instead of analyzing the next coefficient $v_{m+1}$, we prefer to take $m=1$ that is justified using Theorem \ref{teo-Pi-Algorithm} with any initial analytic condition $V(0,\rho) = c_1 \rho + \cdots$ with $c_1 \neq 0$. Then
$v_1(\varphi; 1) = G_0(\varphi)$ with $C_1 =1$ without loss of generality.

The next coefficient satisfies an homogeneous equation $\mathcal{L}^{a}_2(v_2) = 0$ with general solution $v_2(\varphi; C_2) = C_2 G_0(\varphi) \, |\cos\varphi|$. We take $C_2=0$ (hence just $v_2\equiv 0$) choosing the compatible analytic initial condition $V(0,\rho) = c_1 \rho + O(\rho^3)$ so that $v_2(0; C_2) = C_2 = 0$.

The differential equation \eqref{edo-para-vj} with $j=3$ that $v_3$ satisfies is no longer homogeneous since $Q_3 \not\equiv 0$. We let $v_3^h(\varphi)$ to be the solution of the associated homogeneous equation $\mathcal{L}^{a}_3(v_3^h) = 0$. Up to a multiplicative constant it has the form $v_3^h(\varphi) = -2 \cos^2\varphi  G_0(\varphi)$. We express $v_3(\varphi; C_3) = v_3^h(\varphi) (w_3(\varphi)+ C_3)$ and we obtain that
$$
w_3(\varphi) = \frac{L_1}{\sqrt{-\Delta}} \, F_3(\varphi) + G_{3}(\varphi)
$$
being $\Delta = a_{12}^2 + 4 a_{21} < 0$ in $\Lambda$ and $F_3(\varphi)$ is a continuous function on $[0, 2 \pi]$ constructed piecewise where in each of the 3 components of $\Omega_{11} \setminus [0, 2 \pi]$ it is just to add some constant to the function
$$
{\rm arctan}\left( \frac{a_{12} -2 a_{21} \tan\varphi}{\sqrt{-\Delta}}\right).
$$
Moreover, $G_{3}$ is a function defined in $\mathbb{S}^1 \setminus \Omega_{11}$. However, $\lim_{\varphi \to \varphi^*} v_3^h(\varphi) G_{3}(\varphi)= -1/2$ when $\varphi^* \in \Omega_{11}$ and $v_3^h \, G_{3}$ can be extended by continuity to $\Omega_{11}$. Clearly this property is also shared by $v_3$. The necessary center condition determined by imposing $v_{3}$ be defined in $\mathbb{S}^1$ turns out to be that $L_1=0$ since $F_3$ is monotonous, hence not $2 \pi$-periodic.
\end{proof}

Below we present the complete center classification of the 1-parameter subfamily of the $(3, 5)$-semi-homogeneous vector field \eqref{semi-mM} given by
\begin{equation}\label{semi-35-particular}
\dot{x} = -y^3 - 2 x^2 y - 2 x y^2, \ \ \dot{y} = x^5 + x^4 y + a x^3 y^2 + \frac{1}{2} (2 a-1) x^2 y^3 - x y^4.
\end{equation}

\begin{proposition}\label{prop-seihomo-35-particular}
The origin of the $(3, 5)$-semi-homogeneous 1-parameter family \eqref{semi-35-particular} is monodromic for all $a \in \mathbb{R}$. Moreover, it is a center if and only if $3 + 2 a=0$.
\end{proposition}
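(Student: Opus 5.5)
For the monodromy claim, I will specialise the general $(3,5)$-semi-homogeneous family \eqref{semi-mM} to \eqref{semi-35-particular}. This gives
$$a_{30}=0,\quad a_{21}=-2,\quad a_{12}=-2,$$
$$b_{41}=1,\quad b_{32}=a,\quad b_{23}=(2a-1)/2,\quad b_{14}=-1,\quad b_{05}=0.$$
Then $a_{12}^2+4a_{21}=4-8=-4<0$. By the description \eqref{Lambda-35} of $\Lambda$, the origin is monodromic for every $a\in\mathbb{R}$. Moreover, the argument in the proof of Proposition \ref{prop-seihomo-35} applies verbatim for every $a$. After the permutation $(x,y)\mapsto(y,x)$ we have $\Omega_{11}=\{\pi/2,3\pi/2\}$, $\Theta=G_0+G_2\rho^2$ and $G_2|_{\Omega_{11}}=-1$. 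Hence $\Lambda_{11}=\emptyset$, so the whole family lies in $\Lambda\setminus\Lambda_{pq}$, and Theorems \ref{center-V}, \ref{center-Pi} and Remark \ref{remark-principal} are all available.

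For necessity, I will substitute these coefficients into the polynomial $L_1$ of Proposition \ref{prop-seihomo-35}. The terms group as follows:
\begin{itemize}
\item the $a_{12},a_{21}$-only terms give $-32+80-40=8$;
\item the $b_{14}$ term gives $32$;
\item the two $b_{23}$ terms cancel, $32b_{23}-32b_{23}=0$;
\item the $b_{05}$ term vanishes;
\item the $b_{32}$ terms give $-32a+48a=16a$;
\item the $b_{41}$ terms give $32-64+16=-16$.
\end{itemize}
Adding these gives $L_1=24+16a=8(3+2a)$. So a center forces $3+2a=0$ by Proposition \ref{prop-seihomo-35}.

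For sufficiency, fix $a=-3/2$. I will run the procedure of Section 2 with $m=1$ and $v_1=G_0$, $v_2\equiv0$. Now $L_1=0$, so $v_3$ is $2\pi$-periodic, and I will continue computing $v_4,v_5,\dots$ to detect a stabilisation pattern. The aim is to guess a closed form for $V$, e.g.\ $V=\rho\,G_0(\varphi)\,\Psi(\varphi,\rho^2)$ with $\Psi$ rational or a power of a polynomial. Equivalently, I will search directly in Cartesian coordinates for an inverse integrating factor $v(x,y)$ of the form: a polynomial, or a power of a $(1,2)$-quasi-homogeneous polynomial, or a product of two such factors. I will then transfer $v$ to $V$ via the weighted polar change, which introduces only the Jacobian factor. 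Failing that, I will look for a first integral by checking reversibility with respect to a line or a $(1,2)$-weighted symmetry after a linear change of variables.

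Once an explicit $V$ without essential singularity is in hand, Remark \ref{remark-principal} leaves only two options: a center, or a focus of maximal order with $m=1$. I will then evaluate $\mathfrak g=G(r)=\int_0^{2\pi}\mathcal F/V\,d\varphi$ from \eqref{Function-G}, using that it is independent of $r$, for instance by letting $r\to0$ or by computing at a convenient $r$. Showing $\mathfrak g=0$ gives $\eta_1=1$ and hence a center; if a first integral is found instead, the center follows directly.

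I expect the main obstacle to be this sufficiency step, namely producing a closed-form $V$ (or a first integral) at $a=-3/2$. The necessity part and the monodromy part are routine specialisations of results already proved.
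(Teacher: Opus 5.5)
Your monodromy check ($a_{30}=0$, $a_{12}^2+4a_{21}=-4<0$) is correct. Your term-by-term evaluation $L_1=8(3+2a)$ is also correct, and it coincides with the paper's necessity argument.

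\textbf{The gap is sufficiency.} What you write for $a=-3/2$ is a search plan: compute more $v_j$, guess a closed form, look for a polynomial inverse integrating factor, or test reversibility. It is not an argument. No inverse integrating factor, first integral or symmetry is actually produced, and $\mathfrak g$ is never evaluated. So the implication $3+2a=0\Rightarrow$ center is left unproved.

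The missing observation is that at $a=-3/2$ the field has a common factor: $\dot x=-y\,f$ and $\dot y=\tfrac12 x(x^2-2y^2)\,f$, with $f=2x^2+2xy+y^2$ positive definite. Dividing by $f$ is an orientation-preserving orbital equivalence on a punctured neighbourhood. It gives $\mathcal Y=-y\,\partial_x+\tfrac12 x(x^2-2y^2)\,\partial_y$, which is invariant under $(x,y,t)\mapsto(x,-y,-t)$, and a monodromic reversible singularity is a center. This is exactly the paper's proof.

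Your fallback of testing reversibility would fail if applied to $\mathcal X$ itself, because $f$ is not invariant under these reflections. Even after a linear change of variables, $\mathcal X$ admits no linear reversing involution, so the factor must be removed first.

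If you want to stay on your inverse-integrating-factor route, it does close. The function $w=1+x^2-2y^2$ satisfies $\mathcal Y(w)=w\,{\rm div}\,\mathcal Y$, so $v=(2x^2+2xy+y^2)(1+x^2-2y^2)$ is a polynomial inverse integrating factor of $\mathcal X$; your Cartesian search would find it. Integrating gives the first integral $H=(1+x^2-2y^2)e^{-x^2}=1-2y^2-\tfrac12x^4+\cdots$, where the dots have $(1,2)$-weighted degree at least $6$. $H$ has a strict local maximum at the origin. This yields the center directly, with no need to compute $\mathfrak g$ via Remark \ref{remark-principal}.
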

\begin{proof}
In this family the quantity $L_1$ given in Proposition \ref{prop-seihomo-35} is $L_1 = 8 (3 + 2 a)$, hence $3 + 2 a = 0$ as a necessary center condition for the origin of \eqref{semi-35-particular}. Indeed it is also sufficient because, when $a = -3/2$, system \eqref{semi-35-particular} becomes
$$
\dot{x} = -y \, f(x,y), \ \ \dot{y} = \frac{1}{2} x (x^2 - 2 y^2) \, f(x,y),
$$
with $f(x,y)=2 x^2 + 2 x y + y^2$ and the origin is a center because $f$ has an isolated zero at $(x,y)=(0,0)$ so that the system is orbitally equivalent to a time-reversible vector field.
\end{proof}

\section{Proofs}

\subsection{Proof of Proposition \ref{prop-monpq}}

\begin{proof}
Let $\mathcal{Z} = \Theta(\varphi, \rho) \partial_\varphi + R(\varphi, \rho) \partial_\rho$ be its polar vector field and $\hat{\mathcal{Z}} = \mathcal{Z}/ \Theta = \partial_\varphi + \mathcal{F}(\varphi, \rho) \partial_\rho$ with $\mathcal{F} = R/\Theta$ be the associated vector field to $\mathcal{X}$ on the cylinder $C$ defined in \eqref{cilinder}. A parameterization of the Poincar\'e return map $\Pi$ is given by $\Pi(\rho_0) = \Phi(2 \pi; \rho_0)$ where $(\varphi, \Phi(\varphi; \rho_0))$ is the orbit of $\hat{\mathcal{Z}}$ trough the point $(\varphi, \rho) = (0, \rho_0)$ with $\rho_0 > 0$ sufficiently small.

Let the origin be a center of $\mathcal{X}$, that is we have $\Pi(\rho_0) = \rho_0$. We will use an embedding flow argument \cite{EncPer} to $\Pi$. Notice that $\hat{\mathcal{Z}}$ is analytic in $C$ because $\Omega_{pq} = \emptyset$. Since $\Pi$ is analytic and it is also the time--$2 \pi$ flow of the analytic vector field $\mathcal{Z}^\dag = \partial_\varphi$ also defined on $C$, it follows by Lemma 8 of \cite{Li-LLi-Xia} that $\hat{\mathcal{Z}}$ and $\mathcal{Z}^\dag$ are analytically equivalent. In particular it follows that there is an analytic diffeomorphism $\zeta$ defined on $C$ such that $\hat{\mathcal{Z}}= \zeta^* \mathcal{Z}^\dag$. The field $\mathcal{Z}^\dag$ possesses (among others) the inverse integrating factor $V^\dag(\varphi, \rho) = 1$. Denoting by $J$ the Jacobian determinant of $\zeta$ and going back to the original coordinates, we obtain that  $V = (V^\dag \circ \zeta)/J = 1/J$ is an analytic inverse integrating factor of $\hat{\mathcal{Z}}$ defined on $C$ because $J \neq 0$.
\end{proof}

\subsection{Proof of Theorem \ref{center-V}}

\begin{proof}
We define $C^+ = C \setminus \{\rho=0 \}$ and we let $\Psi(t; \rho_0)$ be the (analytic) flow associated to the polar vector field $\mathcal{Z}$ with initial condition $\Psi(0; \rho_0) = (0, \rho_0) \in C^+$, that is for $\rho_0>0$ and sufficiently small. We apply the characteristic method to solve the partial differential equation \eqref{edp-V} in $C^+$. We find that
\begin{equation}\label{charact-V}
v(t, \rho_0) := V \circ \Psi(t; \rho_0) = V(0, \rho_0) \, \exp\left( \int_0^t {\rm div}(\mathcal{Z}) \circ \Psi(s; \rho_0) \, ds \right),
\end{equation}
where $V(0, \rho_0)$ is an arbitrary initial condition that we take to get a Cauchy problem for the partial differential equation \eqref{edp-V}. Of course we will assume that $V(0, \rho_0) \not\equiv 0$ in order to avoid the trivial solution of \eqref{edp-V} that is not an inverse integrating factor by definition.

Notice that $(\varphi, \rho) = \Psi(t; \rho_0)$ is a bijection and defines an analytic change of variables since the orbits of $\mathcal{Z}$ foliate $C^+$, that is,
$$
C^+ = \bigcup_{\stackrel{0 < \rho_0 \ll 1}{\scriptscriptstyle 0 \leq t \leq T(\rho_0)}} \{  \Psi(t; \rho_0) \},
$$
where $T$ is the flight return time function $T : \Sigma^+ \to \mathbb{R}$ defined as $\Psi(T(\rho_0); \rho_0) = (2 \pi, \Pi(\rho_0))$. Since the initial condition $V(0, \rho_0)$ is arbitrary, we take it analytic in $0 < \rho_0 \ll 1$ so that, from \eqref{charact-V}, we deduce that the function $v$ becomes also analytic in $\{ 0 < \rho_0 \ll 1 \} \times \{ 0 \leq t \leq T(\rho _0) \}$. Once we have $v(t, \rho_0)$, clearly we construct $V(\varphi, \rho) = v(\Psi^{-1}(\varphi, \rho))$ that is also analytic in the strip $[0, 2 \pi] \times \{ 0 < \rho \ll 1 \}$. In order to ensure that $V$ is indeed analytic in $C^+$, that is, to ensure that $V$ is $2 \pi$-periodic function of $\varphi$ we will need to assume that the origin must be a center as we show below.

We parameterize with $\rho_0$ the transversal section $\Sigma^+ = \{ \varphi = 0 \} \subset C^+$ to the flow of $\mathcal{Z}$.  Evaluating \eqref{charact-V} at $t=T(\rho_0)$ gives
$$
V(2 \pi, \Pi(\rho_0)) = V(0, \rho_0) \, \exp\left( \int_0^{T(\rho_0)} {\rm div}(\mathcal{Z}) \circ \Psi(t; \rho_0) \, dt \right).
$$
If now we assume that the origin is a center of $\mathcal{X}$, that is $\Pi(\rho_0)=\rho_0$, we deduce that $\int_0^{T(\rho_0)} {\rm div}(\mathcal{Z}) \circ \Psi(t; \rho_0) \, dt = 0$ since the periodic orbits of the center are non-hyperbolic. This can be also deduced from the fundamental equation $\hat{V}(2 \pi, \Pi(\rho_0)) = \hat{V}(0, \rho_0) \Pi'(\rho_0)$ relating $\Pi$ and $\hat{V} = V/\Theta$, see \cite{GaGi4}. Therefore, in the center case the function $V$ satisfies $V(2 \pi, \rho_0) = V(0, \rho_0)$ identically and consequently $V$ is an analytic function well defined in the cylinder $C^+$.
\newline

In the following we will analyze the behavior of $V(\varphi, \rho)$ at $\rho=0$ using complex analysis. First we observe that the flow of $\mathcal{Z}$ can be extended to a neighborhood of $\mathbb{S}^1 \times \{\rho=0\}$ given by the extended cylinder $\hat{C}  \, = \, \left\{ (\varphi, \rho) \in \mathbb{S}^1 \times \mathbb{R} \, : \, 0 \leq |\rho| \ll 1 \right\}$. This extension allows us to use the characteristics  method in $\hat{C} \setminus \{ \rho=0 \}$ so that we can construct, in the center case, an inverse integrating factor $V$ that is analytic in the punctured cylinder $\hat{C} \setminus \{ \rho=0 \}$.

We take any fixed $\varphi \in  \mathbb{S}^1$, and we consider the function $F_{\varphi} : \hat{C} \setminus \{ \rho=0 \} \cap \{ \varphi = \bar{\varphi} \} \to \mathbb{R}$ defined by $F_{\varphi}(\rho) = V(\varphi, \rho)$. This function $F_{\varphi}$ is analytic in $I_0 = I \setminus \{0\}$ with $I \subset \mathbb{R}$ a small neighborhood of the origin and can exhibit an isolated singularity at $\rho=0$. In order to see that $F_{\varphi}$ has a Laurent series representation at $\rho=0$ and convergent in $I_0$ we need to show that there is a complex function $\hat{F}(z)$ with $z \in \mathbb{C}$ and $\rho = {\rm Re}(z)$ defined in a punctured disc $D_0 \subset \mathbb{C}$ of the origin in $\mathbb{C}$ with $I_0 \subset D_0$ such that the restriction of $\hat{F}(z)$ to $I$ is $F_{\varphi}$.

By standard arguments, see the proof of Theorem \ref{teo-Pi-Algorithm}, we know that if $V(0, \rho)$ is analytic at $\rho=0$ then $V(\varphi, \rho)$ is analytic in $[0, 2\pi] \times I$,  being $I$ a neighborhood of the origin, except at $(\varphi, \rho) = (\varphi^*, 0)$ with $\varphi^* \in \Omega_{pq}$. So we only need to show that the function $F_{\varphi^*}$ admits a Laurent series representation.

We define the complex function $R(\varphi, z)$ just by replacing, in the Taylor series centered at some point $(\varphi_0, \rho_0) \in \hat{C}$ of $R(\varphi, \rho)$, the real variable $\rho$ by the complex one $z$. Then, for each fixed $\varphi$, the complex function $R$ is holomorphic in a disc centered at $\rho_0$. We consider the real-complex differential system
\begin{equation}\label{complex-edo}
\begin{array}{lll}
\dot{\varphi} &=& \Theta(\varphi, \rho) = G_0(\varphi) + G_p(\varphi) \rho^p + O(\rho^{p+1}), \\
\dot{z} &=& R(\varphi, z) = R_q(\varphi) z^q + O(z^{q+1}),
\end{array}
\end{equation}
where $G_p \not\equiv 0$ and $R_q \not\equiv 0$ with $p\geq 1$ and $q \geq 1$. Clearly the line $z=0$ is invariant for \eqref{complex-edo} and the only singularities of \eqref{complex-edo} on $z=0$ are $(\varphi, z) = (\varphi^*, 0)$ with $\varphi^* \in \Omega_{pq}$. We are going to analyze when these singularities are isolated or not. We look for solutions of $\Theta(\varphi, \rho) = R(\varphi, z) = 0$ with $z \neq 0$. Taking into account that $R(\varphi, z) = z \,  \tilde{R}(\varphi, z)$ we consider the branches $z_j(\varphi)$ such that $\tilde{R}(\varphi, z_j(\varphi)) \equiv 0$ for any $\varphi$ in a neighborhood  (or semi-neighborhood) of $\varphi^*$ with $z_j(\varphi^*)=0$. Since by monodromy $R_1(\varphi^*) = 0$, we cannot apply the complex Implicit Function Theorem because $\partial_z \tilde{R}(\varphi^*, 0) = 0$. Anyway the number of such branches $z_j$ is finite by the complex Weierstrass Preparation Theorem and, moreover they are parametrizables by Newton-Puiseux Theorem as a convergent power series in the variable $(\varphi-\varphi^*)^{1/n}$ for some positive integer $n$, that is, $z_j(\varphi) = c_j \, (\varphi-\varphi^*)^{s/n} + \cdots$ with complex $c_j \neq 0$ and integer $s \geq 1$. We also have the following real Taylor expansions at $\varphi=\varphi^*$ that, by monodromy, are $G_0(\varphi) = \alpha (\varphi-\varphi^*)^{2k} + \cdots$ with $k \geq 1$, $\alpha \neq 0$ and $G_p(\varphi) = \beta  (\varphi-\varphi^*)^{m} + \cdots$ with $\beta \neq 0$ and $m \geq 0$.

We now consider the equation $\Theta(\varphi, \rho_j(\varphi)) = 0$ with $\rho_j(\varphi) = {\rm Re}(z_j(\varphi))$ and insert there the above expansions. The analysis is simpler recasting the expressions using the real variable $\varepsilon = (\varphi-\varphi^*)^{1/n}$. Using that $\rho_j(\varphi) = {\rm Re}(c_j) \, (\varphi-\varphi^*)^{s/n} + \cdots$, the outcome is that the real function $h_j(\varepsilon) := \Theta(\varphi^* + \varepsilon^n, \rho_j(\varphi^* + \varepsilon^n))$ has a convergent power series expansion $h_j(\varepsilon) = \alpha \varepsilon^{2 k n} + \beta \, {\rm Re}(c_j) \varepsilon^{m n+p s} + \cdots$ where here the dots denote higher order terms with respect to $\min\{2 k n, m n+p s \}$. Notice that $h_j \not\equiv 0$ is just the condition for having isolated solutions because $h_j$ would be a nonzero real analytic function near the origin, then its real zeros cannot accumulate at the origin. Indeed, if the solutions $(\varphi, z) = (\varphi^*, 0)$ were not isolated then $\Theta(\varphi, \rho_j(\varphi)) \equiv 0$ with $\rho_j(\varphi) = {\rm Re}(z_j(\varphi))$ meaning that $\Lambda_{pq} \neq \emptyset$. Of course it may happen that $(\varphi, z) = (\varphi^*, 0)$ be isolated but $\Lambda_{pq} \neq \emptyset$.
\newline

We consider the flow $\hat\Psi(t; z_0) = (\varphi(t; \rho_0), z(t; z_0))$ of \eqref{complex-edo} with initial condition $\hat\Psi(0; z_0) = (0, z_0)$ where $z_0 \in D \subset \mathbb{C}$ being $D$ a disc centered at the origin of radius sufficiently small and with $I \subset D$. Once we have proved that the singularities of \eqref{complex-edo} on $z=0$ are isolated, it follows that the saturation by the flow $\hat\Psi$ of the punctured disc $D_0 = D \setminus \{z=0\}$, that is $\hat\Psi(t; D_0)$, foliates the solid torus $\mathbb{T}$ excluding the line $z=0$ with
$$
\mathbb{T} \setminus \{z=0\} = \bigcup_{\stackrel{z_0 \in D_0}{\scriptscriptstyle 0 \leq t \leq T(z_0)}} \{  \hat\Psi(t; z_0) \},
$$
being $T(\rho_0) > 0$ the time such that $\varphi(T(\rho_0); \rho_0) = 2 \pi$.

We take the inverse integrating factor $V(\varphi, \rho)$ of the vector field $\mathcal{Z}$ and we know, see equation \eqref{charact-V}, that
$V(\varphi, \rho) = V(0, \rho_0) \, \exp\left( \int_0^t {\rm div}(\mathcal{Z}) \circ \Psi(\sigma; \rho_0) \, d\sigma \right)$, where we recall that $\Psi(t; \rho_0)$ is the flow of $\mathcal{Z}$ and $(\varphi, \rho) = \Psi(t; \rho_0)$ and that $V(0, \rho_0)$ is analytic in $I$. We consider its complexified version defining the function
$$
\hat{V}(\varphi, z) = V(0, z_0) \, \exp\left( \int_0^t {\rm div}(\mathcal{Z}) \circ \hat\Psi(\sigma; z_0) \, d\sigma \right)
$$
where $(\varphi, z) = \hat\Psi(t; z_0)$. For any $\varphi^* \in \Omega_{pq}$, we define the function $\hat{F}(z) = \hat{V}(\varphi^*, z)$ by
$$
\hat{V}(\varphi^*, z) = V(0, z_0) \, \exp\left( \int_0^{T^*(\rho_0)} {\rm div}(\mathcal{Z}) \circ \hat\Psi(\sigma; z_0) \, d\sigma \right)
$$
with $\hat\Psi(T^*(\rho_0); z_0) = (\varphi^*, z)$. Then $\hat{F}(z)$ is holomorphic in a  punctured disc of the origin, hence by Laurent theorem (see for example \cite{DyEd}) it admits a Laurent series expansion
$$
\hat{F}(z) = \sum_{n=-\infty}^{\infty} a_n z^n.
$$
We notice that, by construction, the real-valued restriction of $\hat{F}$ to $I_0$ is $V(\varphi^*, \rho)$, hence we obtain the real Laurent expansion
$$
V(\varphi^*, \rho) = \sum_{n=-\infty}^{\infty} a_n \rho^n,
$$
valid in $I_0$. Therefore the function $V(\varphi, \rho)$ has at $\rho=0$ the Laurent expansion
\begin{equation}\label{serie-laurent}
V(\varphi, \rho) = \sum_{n=-\infty}^{\infty} v_n(\varphi) \rho^n,
\end{equation}
where $v_n(\varphi^*) = a_n$. Notice that in particular we have proved that the coefficient functions $v_n$ are bounded and well defined in $\mathbb{S}^1$.
\end{proof}

\begin{remark}
{\rm The fact that a real function $f(\rho)$ be analytic at $I_0$ does not implies, in general, that it admits a Laurent series representation. A clear example is the piecewise function $f(\rho) = 1$ for $\rho > 0$ and $f(\rho) = 0$ for $\rho < 0$. Other interesting example is given by the complex function defined by a series $F(z) = \sum_{n \geq 1} 1/(1 + n^2 z^2)$ that has the sequence of singularities $\{ z_n = \pm i/n \}_{n \in \mathbb{N}}$ accumulating at the origin, hence $F(z)$ has not a Laurent series representation at $z=0$. The real function $f(\rho) = F(\rho)$ converges in $\mathbb{R} \setminus \{0\}$ just comparing with $\sum_{n \geq 1} 1/n^2$ . }
\end{remark}

\begin{remark}
{\rm We restrict ourselves to a particular inverse integrating factor $W(\varphi, \rho)$ of $\mathcal{Z}$ that exists for any center and comes from its associated inverse integrating factor $v(x,y)$ of $\mathcal{X}$. This $v(x,y)$ exists and is $C^\infty$ in a neighborhood of the center at $(x,y)=(0,0)$, see \cite{GP}, and analytic in a punctured neighborhood of the origin. We emphasize that the function $v$ can be flat at the origin. The relation between $W$ and $v$ is given by
\begin{equation}\label{V-polar-def}
W(\varphi, \rho) = \frac{v(\rho^p \cos\varphi, \rho^q \sin\varphi)}{\rho^{r} \, J(\varphi, \rho)}
\end{equation}
where $W$ is defined in $\hat{C} \backslash \{\rho=0\}$ and $J(\varphi, \rho) = \rho^{p+q-1}(p \cos^2\varphi + q \sin^2\varphi)$ is the Jacobian of the weighted polar blow-up (which only vanishes at $\rho=0$) and $r \in \mathbb{N}$ is the $(p,q)$--quasihomogeneous degree of the leading vector field associated to $\mathcal{X}$, see \cite{GaGi3}. Notice that by construction, in case that $v$ is not flat at the origin, $W$ is a Laurent inverse integrating factor $W(\varphi, \rho) = \sum_{j \geq m} w_j(\varphi) \rho^{j}$ with $w_m(\varphi) \not\equiv 0$. On the contrary, $W$ is flat at $\rho=0$ when $v$ is flat at the origin and, in this case $W(0, \rho) = v(\rho^p, 0) / \rho^{r+p+q-1})$ cannot be analytic in a neighborhood of $\rho=0$ except the trivial case $W(0, \rho) \equiv 0$.  }
\end{remark}

\begin{remark}
{\rm In Proposition 7 of \cite{GGR} there appears an example of a center without Laurent first integral $H(\varphi, \rho)$ with leading term of the polar vector field $\mathcal{Z}$, that is satisfying $\mathcal{Z}(H) = 0$ and possessing a Laurent expansion at $\rho=0$ with leading term. It is worth to emphasize that if one uses the techniques of the proof of Theorem \ref{center-V} adapted to try to prove that any center has a Laurent first integral with leading term we find an obstruction. It is that although the characteristic method allows to see in the center case that there is a first integral $H$ analytic in $\hat{C} \setminus \{ \rho=0 \}$ and even proving that the function $G_{\bar{\varphi}}(\rho) = H(\bar{\varphi}, \rho)$ for any $\bar{\varphi} \in \mathbb{S}^1$ has a Laurent series expansion $G_{\bar{\varphi}}(\rho) = \sum_{n \geq m} b_n \rho^n$ with $m \in \mathbb{Z}$, it may happen that this series has $m=0$ and $b_n = 0$ for $n > 0$, so that $H$ reduces to the trivial constant $H=b_0$. This phenomena is just what happens in the example of \cite{GGR}. We also mention that Lemma 3 of \cite{GGR} proves that any monodromic singularity of $\mathcal{X}$ such that its associated $\mathcal{Z}$ has a Puiseux first integral must be a center. }
\end{remark}

\subsection{Proof of Theorem \ref{center-Pi}}

We will use the same notation than that of the proof of Theorem \ref{center-V}.

Let the origin be a monodromic singularity of the analytic vector field $\mathcal{X}$ and $\Psi(t; \rho_0)$ be the (analytic) flow associated to the associated polar vector field $\mathcal{Z}$ with initial condition $\Psi(0; \rho_0) = (0, \rho_0) \in \hat{C} \setminus \{ \rho=0 \}$, that is for $0 < |\rho_0| \ll 1$. We define the Poincar\'e map $\Pi$ by $\Psi(T(\rho_0); \rho_0) = (2 \pi, \Pi(\rho_0))$ where $T$ is the flight return time function. We known that $T$ has a singularity at $\rho_0=0$ since  $\lim_{\rho_0 \to 0} T(\rho_0) = \infty$ when $\Omega_{pq} \neq \emptyset$. In that case also it is well known that the flow $\Psi(t; \rho_0)$ is not analytic at $\rho_0=0$.

Anyway we claim that $\Pi(\rho_0)$ is analytic in $I_0 = I \setminus \{0\}$ with $I \subset \mathbb{R}$ a small neighborhood of the origin when we restrict the parameters to $\Lambda \setminus \Lambda_{pq}$. The key point is that the differential equation
\begin{equation}\label{eq3**}
\frac{d \rho}{d \varphi} \, = \,  \frac{R(\varphi, \rho)}{\Theta(\varphi, \rho)},
\end{equation}
of the orbits of $\mathcal{Z}$ is analytic in $\hat{C} \backslash \Theta^{-1}(0)$ so that it is analytic in $\hat{C} \setminus \{ \rho=0 \}$ when we restrict the parameters to $\Lambda \setminus \Lambda_{pq}$. Then we consider the solution $\Phi(\varphi; \rho_0)$ of the Cauchy problem (\ref{eq3**}) with initial condition $\Phi(0; \rho_0) = \rho_0 \neq 0$ sufficiently small. It is clear by construction that $\Pi(\rho_0) = \Phi(2 \pi; \rho_0)$ so that $\Pi$ is analytic in $I_0$ proving the claim.

We consider again the complex flow $\hat\Psi(t; z_0) = (\varphi(t; \rho_0), z(t; z_0))$ of the real-complex system \eqref{complex-edo} with initial condition $\hat\Psi(0; z_0) = (0, z_0)$ where $z_0 \in D_0 \subset \mathbb{C}$ the small disc with the origin removed. Assuming its singularities $(\varphi^*, 0)$ are isolated, we define the complex Poincar\'e map $\hat{\Pi}(z) : D_0 \to \mathbb{C}$ as $\hat\Psi(T(\rho_0); z_0) = (2 \pi, \hat{\Pi}(z_0))$. Then $\hat{\Pi}$ is holomorphic at $D_0$, hence (invoking Laurent theorem) it admits a convergent Laurent series centered at the origin. Going back to the reals, since $\hat{\Pi}$ restricted to $I_0 \subset D_0$ coincides with $\Pi$, the real Poincar\'e map $\Pi$ also has a Laurent series convergent in $I_0$. Once we know that $\Pi$ has linear part at the origin, the proof finishes taking into account the structure $\Pi(\rho_0) = \eta_1 \rho_0 + o(\rho_0)$ proved in \cite{Ilyashenko}.

\subsection{Proof of Theorem \ref{teo-Pi-Algorithm}}

Following the proof of Theorem \ref{center-V}, and taking into account the arbitrariness of the initial condition $V(0, \rho)$, we take it analytic in a neighborhood $I \subset \mathbb{R}$ of the origin and not only
in $0 < \rho_0 \ll 1$ as in that proof. Notice that $\{ \varphi = 0 \}$ is a transversal to the flow of $\mathcal{Z}$ because $0 \not\in \Omega_{pq}$. Then we apply the characteristic method to obtain the solution $V(\varphi, \rho)$ of the Cauchy problem formed by the partial differential equation \eqref{edp-V} and the analytic initial condition $V(0, \rho)$ at $\rho=0$. It follows that $V$ can be uniquely analytically extended to $\rho=0$ except, perhaps, at the singularities $(\varphi, \rho) = (\varphi^*, 0)$ for all $\varphi^* \in \Omega_{pq}$ thanks to  Cauchy-Kovalevskaya Theorem.

From the unicity of coefficients in the expansions of the Laurent series \eqref{V-Laurent} it follows \eqref{V-Laurent} becomes a Taylor series when $\varphi \not\in \Omega_{pq}$ finishing the proof.
\newline

\noindent {\bf Acknowledgments.} The authors are very grateful to Professor H\'ector Giacomini for detecting mistakes in a draft version of this work that has led to significant changes in the corrected version.



\begin{thebibliography}{99}

\bibitem{ADGG} {\sc A. Algaba, M. D\' \i az, C. Garc\' \i a, J. Gin\'e}, {\it Center problem for generic degenerate vector fields}, Nonlinear Anal. {\bf 214} (2022), Paper No. 112597, 23 pp.

\bibitem{AGG} {\sc A. Algaba,  C. Garc\' \i a, J. Gin\'e}, {\it  Center conditions to find certain degenerate centers with characteristic directions}, Math. Comput. Simulation {\bf 215} (2024), 628--638.

\bibitem{AGR} {\sc A. Algaba, C. Garc\' \i a, M. Reyes}, {\it Characterization of a monodromic singular point of a planar vector field}, Nonlinear Anal. {\bf 74} (2011), 5402--5414.

\bibitem{AGR2} {\sc A. Algaba, C. Garc\' \i a, M. Reyes}, {\it A new algorithm for determining the monodromy of a planar differential system}, Appl. Math. Comput. {\bf 237} (2014), 419--429.

\bibitem{Ar} {\sc V.I. Arnold}, {\it Chapitres Suppl\'ementaires de la Th\'eorie des \'Equations Diff\'erentielles Ordinaires}, Moscow: Editions Mir. 1980.

 \bibitem{BerroneGiacomini}{\sc L.R. Berrone, H. Giacomini}, {\it On the vanishing set of inverse integrating factors}, Qual. Theory Dyn. Syst. {\bf 1} (2000), 211--230.

\bibitem{BRY}{\sc M. Briskin, N. Roytvarf, Y. Yosef}, {\it Center conditions at infinity for Abel differential equations},  Ann. of Math. (2) {\bf 172} (2010), no. 1, 437--483.

\bibitem{Bruno} {\sc A.D. Bruno}, Local methods in nonlinear differential equations. Springer-Verlag, Berlin, 1989.

\bibitem{CYZ} {\sc H. Chen, Y. Liu, X. Zeng}, { \it Center conditions and bifurcation of limit cycles at degenerate singular points in a quintic polynomial differential system}, Bull. Sci. Math. {\bf 129} (2005), no. 2, 127--138.

\bibitem{Dum} {\sc F. Dumortier,} {\it \ Singularities of vector fields on the plane}, J. Differential Equations
    {\bf 23} (1977), 53--106.

\bibitem{DyEd} {\sc R.H. Dyer, D.E. Edmunds}, From real to complex analysis. Springer Undergraduate Mathematics Series, 2014.

\bibitem{EncPer} {\sc A. Enciso, D. Peralta-Salas},{\it \ Existence and vanishing set of inverse integrating factors for analytic vector fields}, Bull. London Math. Soc. {\bf 41} (2009), 1112--1124.

\bibitem{GaGiGr} {\sc I.A. Garc\'{\i}a, H. Giacomini, M. Grau}, {\it The inverse integrating factor and the Poincar\'e map}, Trans. Amer. Math. Soc. {\bf 362} (2010), 3591--3612.

\bibitem{Ga-Gi} {\sc I.A. Garc\' \i a, J. Gin\'e},  {\it Center problem with characteristic directions and inverse integrating factors}, Commun. Nonlinear Sci. Numer. Simul. {\bf 108} (2022), 14 pp.

\bibitem{GaGi2}{\sc I.A. Garc\' \i a, J. Gin\'e}, {\it The Poincar\'e map of degenerate monodromic singularities with Puiseux inverse integrating factor}, Adv. Nonlinear Anal. {\bf 12} (2023), 20220314.

\bibitem{GaGi5}{\sc I.A. Garc\' \i a, J. Gin\'e}, {\it The linear term of the Poincar\'e map at singularities of planar vector fields}, J. Differential Equations, {\bf 396} (2024), 44--67.

\bibitem{GaGi3}{\sc I.A. Garc\' \i a, J. Gin\'e}, {\it Characterization of centers by its complex separatrices}, J. Differential Equations {\bf 42} (2025), 113506.

\bibitem{GaGi4}{\sc I.A. Garc\' \i a, J. Gin\'e}, {\it Principal Bautin ideal of monodromic singularities with inverse integrating factors}, J. Differential Equations {\bf 460} (2026), 114069.

\bibitem{GGGrau} {\sc I.A. Garc\' \i a, J. Gin\'e, M. Grau}, {\it A necessary condition in the monodromy problem for analytic differential equations on the plane}, J. Symbolic Comput. {\bf 41} (2006) 943--958.

\bibitem{GGR} {\sc I.A. Garc\' \i a, J. Gin\'e, A.L. Rodero}, {\it Existence and non-existence of Puiseux inverse integrating factors in analytic monodromic singularities}, Stud. Appl. Math. {\bf 153} (2024), e12724.

\bibitem{Ga-Ma} {\sc I.A. Garc\' \i a, S. Maza}, {\it A new approach to center conditions for simple analytic monodromic singularities}, J. Differential Equations {\bf 248} (2010), 363--380.

\bibitem{Ga-Ma-Ma} {\sc A. Gasull, V. Ma\~{n}osa, F. Ma\~{n}osas,} {\it \ Monodromy and stability of a class
    of degenerate planar critical points}, J. Differential Equations {\bf 217} (2005), 363--376.

\bibitem{G1} {\sc J. Gin\'e}, {\it On the degenerate center problem}, Internat. J. Bifur. Chaos Appl. Sci. Engrg. {\bf 21} (2011), no. 5, 1383--1392.

\bibitem{GM} {\sc J. Gin\'e, S. Maza}, {\it The reversibility and the center problem}, Nonlinear Anal {\bf 74} (2011), no. 2, 695--704.

\bibitem{GP} {\sc J. Gin\'e, D. Peralta-Salas}, {\it Existence of inverse integrating factors and Lie symmetries for degenerate planar centers}, J. Differential Equations {\bf 252} (2012), no. 1, 344--357.

\bibitem{GS} {\sc J. Gin\'e, D. Sinelshchikov}, {\it A new mechanism for producing degenerate centers in polynomial differential systems}, Nonlinear Anal. 264 (2026), Paper No. 113981, 10 pp.

\bibitem{Ilyashenko} {\sc Yu.S. Il'yashenko},{\it \ Finiteness theorems for limit cycles.} Translated from the Russian by H. H. McFaden. Translations of Mathematical Monographs, {\bf 94}. American Mathematical Society, Providence, RI, 1991.

\bibitem{Ilyashenko-2} {\sc Yu.S. Il'yashenko},{\it Algebraic unsolvability and almost algebraic solvability of the problem for the center-focus},  center-focus. Funkts. Anal. Prilozh. {\bf 6} (1972), 30--37.

\bibitem{Li-LLi-Xia} {\sc W. Li, J. Llibre, X. Zhang}, {\it \ Extension of Floquet's theory to nonlinear periodic differential systems and embedding diffeomorphisms in differential flows},  Amer. J. Math.  {\bf 124}  (2002), 107--127.

\bibitem{LWX} {\sc J. Llibre, Z. Wang, D. Xiao}, {\it Centers and Lyapunov quantities in a cubic polynomial Kolmogorov differential system}, Nonlinear Anal. Real World Appl. 89 (2026), Paper No. 104536, 14 pp.

\bibitem{Ma} {\sc V. Ma\~{n}osa}, {\it On the center problem for degenerate singular points of planar vector fields},
Internat. J. Bifur. Chaos Appl. Sci. Engrg. {\bf 12} (2002), no. 4, 687--707.

\bibitem{Me-2}{\sc N.B. Medvedeva}, {\it \ The principal term of the asymptotic expansion of the monodromy
    transformation: Calculation in blowing-up geometry}, Siberian Math. J. {\bf 38}, 114--126.

\bibitem{Me-3}{\sc N.B. Medvedeva, E. Batcheva}, {\it \ The second term of the asymptotics of the monodromy
    map in case of two even edges of Newton diagram}, Electron. J. Qual. Th. Diff. Eqs. {\bf 19}, 1--15.

\bibitem{Me}{\sc N.B. Medvedeva}, {\it On the analytic solvability of the problem of distinguishing between
    center and focus}, Proc. Steklov Inst. Math. {\bf 254} (2006), 7--93.

\bibitem{RS}{\sc V. G. Romanovski, D.S. Shafer}, The center and cyclicity problems: a computational
    algebra approach. Birkh\"auser Boston, Inc., Boston, MA, 2009.

 \bibitem{TLZ}{\sc Y. Tang, W. Li, Z. Zhang}, {\it Focus-center problem of planar degenerate system}, J. Math. Anal. Appl. {\bf 345} (2008), 934--940.

\end{thebibliography}
\end{document}